\newlength{\tabwidth}
\newlength{\tabheight}
\newlength{\tabrule}
\newlength{\tabwidthx}
\newlength{\tabheightx}
\def\gentabbox#1#2#3#4{\vbox to \tabheight{\setlength{\tabrule}{#3}%
  \setlength{\tabwidthx}{#1\tabwidth}\addtolength{\tabwidthx}{\tabrule}%

\setlength{\tabheightx}{#2\tabheight}\addtolength{\tabheightx}{-\tabheight}%
  \hbox to #1\tabwidth{%
    \hspace{-0.5\tabrule}\rule{\tabrule}{#2\tabheight}\hspace{-\tabrule}%
    \vbox to #2\tabheight{\hsize=\tabwidthx%
      \vspace{-0.5\tabrule}\hrule width\tabwidthx height\tabrule%
      \vspace{-0.5\tabrule}\vfil%
      \hbox to \tabwidthx{\hss#4\hss}%
        \vfil\vspace{-0.5\tabrule}%
      \hrule width\tabwidthx height\tabrule\vspace{-0.5\tabrule}}%
    \hspace{-\tabrule}\rule{\tabrule}{#2\tabheight}\hspace{-0.5\tabrule}}%
  \vspace{-\tabheightx}}}
\def\genblankbox#1#2{\vbox to \tabheight{\vfil\hbox to
#1\tabwidth{\hfil}}}
\def\tabbox#1#2#3{\gentabbox{#1}{#2}{0.4pt}{\strut #3}}
\newenvironment{tableau}{\bgroup\catcode`\:=13 \catcode`\.=13
  \catcode`\;=13 \catcode`\>=13 \catcode`\^=13
  \setlength{\tabheight}{3ex}\setlength{\tabwidth}{3ex}%
  \def\b##1##2##3{\gentabbox{##1}{##2}{1.2pt}{\vbox{##3}}}%
  \def\n##1##2##3{\gentabbox{##1}{##2}{0.4pt}{\vbox{##3}}}%
  \vbox\bgroup\offinterlineskip}{\egroup\egroup}
\newcommand{\GL}{\mathrm{GL}}
\newcommand{\U}{\mathrm{U}}
\newcommand{\ol}{\overline}
\newcommand{\sm}{\mathrm{sm}}
\newcommand{\IC}{\mathrm{IC}}
\newcommand{\SL}{\mathrm{SL}}
\newcommand{\Sp}{\mathrm{Sp}}
\newcommand{\CV}{\mathrm{CV}}
\newcommand{\CC}{\mathrm{CC}}
\newcommand{\Ann}{\mathrm{Ann}}
\newcommand{\ks}{\mathrm{KS}}
\newcommand{\lra}{\longrightarrow}
\newcommand{\bs}{\backslash}
\newcommand{\opp}{-}
\newtheorem{prop}{Proposition}[section]
\newtheorem{lemma}[prop]{Lemma}
\newtheorem{theorem}[prop]{Theorem}
\numberwithin{equation}{section}
\newtheorem{corollary}[prop]{Corollary}
\theoremstyle{definition}
\newtheorem{remark}[prop]{Remark}
\newtheorem{definition}[prop]{Definition}
\newcommand{\frb}{\mathfrak{b}}
\newcommand{\frg}{\mathfrak{g}}
\newcommand{\frh}{\mathfrak{h}}
\newcommand{\frk}{\mathfrak{k}}
\newcommand{\frl}{\mathfrak{l}}
\newcommand{\frn}{\mathfrak{n}}
\newcommand{\frp}{\mathfrak{p}}
\newcommand{\frq}{\mathfrak{r}} 
\newcommand{\frs}{\mathfrak{s}}
\newcommand{\fru}{\mathfrak{u}}
\newcommand{\bbC}{\mathbb{C}}
\newcommand{\bbN}{\mathbb{N}}
\newcommand{\bbR}{\mathbb{R}}
\newcommand{\bbZ}{\mathbb{Z}}
\newcommand{\caB}{\mathcal{B}}
\newcommand{\caD}{\mathcal{D}}
\newcommand{\caL}{\mathcal{L}}
\newcommand{\caN}{\mathcal{N}}
\newcommand{\caR}{\mathcal{R}}
\newcommand{\caZ}{\mathcal{Z}}
\newcommand{\beq}{\begin{equation}}
\newcommand{\eeq}{\end{equation}}
\def\ad {\mathop{\hbox {ad}}\nolimits}
\def \det {\mathrm{det}}
\def\Id {\mathop{\hbox{Id}}\nolimits}
\begin{document}
\title[reducible characteristic cycles]{reducible characteristic cycles of harish-chandra modules for $\U(p,q)$ and the kashiwara-saito singularity}
\author{Leticia Barchini}
\address{Department of Mathematics, Oklahoma State University, Stillwater, OK 74078}
\email{leticia@math.okstate.edu}

\author{Petr Somberg}
\address{Mathematical Institute, Charles University, Sokolovask\'{a} 83, 18000 Prague 8}
\email{somberg@karlin.mff.cuni.cz}

\author{Peter E.~Trapa}
\address{Department of Mathematics, University of Utah, Salt Lake City, UT 84112-0090}
\email{ptrapa@math.utah.edu}

\thanks{L.~Barchini and P.~Somberg acknowledge the financial support from the grant GACR P201/12/G028.  P.~Trapa was supported by
the NSF grant DMS-1302237 and the Simons Foundation.}

\begin{abstract}
We give examples of reducible characteristic cycles for irreducible Harish-Chandra modules for $U(p,q)$ by 
analyzing a four-dimensional singular subvariety of $\bbC^8$.  We relate this singularity to the Kashiwara-Saito
singularity arising for Schubert varieties for $\GL(8,\bbC)$ with reducible characteristic cycles, as well as recent 
related examples of Williamson.  In particular, this gives another explanation of the simple highest weight modules
for $\frg\frl(12,\bbC)$ with reducible associated varieties that Williamson discovered.
\end{abstract}

\maketitle

\section{introduction}
Characteristic cycles of Harish-Chandra modules are delicate and important geometric invariants of central interest.  
To take but one example, they are at the heart of the Adams-Barbasch-Vogan 
approach to the Arthur conjectures for real groups \cite{abv}.  They are notoriously difficult to compute in any generality.

In the setting of highest weight modules and Schubert varieties, Kazhdan-Lusztig \cite[\S 7]{kl} first raised the question of whether characteristic cycles in Type A were always
irreducible.  (Borho-Brylinski's results \cite{bb} implied that this is equivalent to the question of whether characteristic cycles of Harish-Chandra bimodules for $\GL(n,\bbC)$ were always irreducible.)  This remained open until Kashiwara-Saito \cite{ks} found a surprising example in $G= \GL(8,\bbC)$.  The relevant singularity controlling this example was an an eight-dimensional subvariety of $\bbC^{16}$ (recalled in \eqref{e:ks} below) and was called the Kashiwara-Saito singularity in \cite[Example 1.14]{w:icm}.  The reducibility example of Kashiwara-Saito had the property that the two irreducible components
had different moment map images in the dual $\frg^*$ of the Lie algebra of $G$.  For this reason, results of Joseph \cite{joseph-polys} show that the reducibilty could not be detected by
considering associated varieties in $\frg^*$ of simple highest weight modules for $\frg$ (in the sense of \cite{joseph-av}).

The next development in this direction was the example of Williamson \cite{williamson} for $\GL(12,\bbC)$ of a reducible characteristic cycle, this time with both components having the same moment map image.  Representation theoretically, the results of Joseph \cite{joseph-polys} now implied that this example corresponded to a simple highest weight module for $\frg\frl(12,\bbC)$ whose associated variety was reducible,
something which was thought not to be possible for $\frg\frl(n)$.  Again it was the Kashiwara-Saito singularity that controlled Williamson's example.  

The purpose of this paper is to give the first examples of reducible characteristic cycles for irreducible Harish-Chandra modules for $U(p,q)$ with trivial
infinitesimal character.  These examples are defined in terms of $\GL(p,\bbC) \times \GL(q,\bbC)$ orbits on the flag variety for $\GL(n,\bbC)$ with $p+q=n$.
For $(p,q)=(4,4)$ we find an orbit whose closure has reducible characteristic cycle (Theorem \ref{t:main}(a)); like the Kashiwara-Saito example, the two components
have different moment map images.  We then find a closely related example for $(p,q)=(6,6)$ where, like the Williamson example,  both components have the same moment map image (Theorem \ref{t:main}(b)).  Representation theoretically, the Harish-Chandra
module arising in the $U(6,6)$ example are cohomologically induced in the good range from the $U(4,4)$ example.

In light of the examples of Kashiwara-Saito and Williamson, the existence of our examples is perhaps not surprising.   One might reasonably expect that the Kashiwara-Saito singularity
is showing up in both places.   Here there is a nice surprise:  the relevant singularity controlling our reducibility is in fact a four-dimensional subvariety of $\bbC^8$ (given in Lemma \ref{l:maina-2} and  Lemma \ref{new}).  The close relationship is explained in Sections \ref{s:ks} and 
 \ref{s:williamson}.  In particular, the reducibility in the Kashiwara-Saito and Williamson examples could be deduced from the lower-dimensional considerations here (though, to be fair, we never would
 have found our examples without \cite{ks} and \cite{williamson} which guided how we looked for them).

Here is an outline of the paper.   In Section \ref{s:slices}, we develop the general framework for normal slices to $K$ orbits on the flag variety for $G$ where $K$ is the fixed
points of an involution of a connected reductive algebraic group $G$.  For orientation, we begin with the perhaps more familiar setting of Bruhat cells in Section \ref{s:bruhat} before turning to $K$ orbits in Section \ref{s:korbits}.  (The former can be recovered as a special case of the latter by taking $K$ to be the diagonal copy of $G$ acting on pairs of flags.)  We also look at the setting for cohomological induction (Section \ref{s:coh-induction}) proving a useful result (Proposition \ref{p:coh-induction}) with  applications to representation theory (Corollary \ref{c:coh-induction-rep}) of independent interest. 

We  state our main results in Theorem \ref{t:main}.  We apply the theory of Section \ref{s:slices} to explicitly analyze the intersection of a particular orbit closure with a relevant eight-dimensional normal slice.  The singular intersection $Y$ is described in Lemma \ref{l:maina-2} based on general results of Wyser-Yong \cite{wyser-yong}.  A more convenient realization is given as the variety $Z$ in Lemma \ref{new}.  We then use a criterion of Braden \cite{braden} (Lemma \ref{l:braden}) to prove the reducibility of the characteristic cycle of $Y$ (Lemma \ref{l:maina-3})
from which Theorem \ref{t:main}(a) follows.
Proposition \ref{p:coh-induction} is then used to deduce Theorem \ref{t:main}(b) in Section \ref{s:proofb}.  Further explicit details are given in Section \ref{s:moment-map}.

In Section \ref{s:ks}, we relate our singularity $Z$ with the Kashiwara-Saito singularity $Z_\ks$.  We conclude in Section \ref{s:williamson} 
by explaining the relationship with the examples of Williamson by relating certain Goldie rank polynomials to certain Joseph polynomials.  In particular, this gives another explanation of the simple highest weight modules
for $\frg\frl(12,\bbC)$ with reducible associated varieties that Williamson discovered.

\section{normal slices}
\label{s:slices}
\subsection{Bruhat cells}
\label{s:bruhat}

Fix a complex reductive algebraic group $G$ with Lie algebra $\frg$.  Fix a Borel subalgebra $\frb = \frh \oplus \frn$ with corresponding positive root system $\Delta^+ \subset \Delta:=\Delta(\frh,\frg) \subset \frh^*$.  Write $B$ for the normalizer in $G$ of $\frb$, and similarly for $H$ and $N$.   Let $N^\opp$ denote the opposite unipotent group.  
Let $W$ denote the Weyl group.  Let $\caB$ denote the variety of Borel subalgebras in $\frg$.  There is a $G$ equivariant isomorphism of $\caB$ with $G/B$ (so that $\frb$ corresponds to $eB$).  

For $\alpha \in \Delta$, let $\frg_\alpha$ denote the corresponding root space.  Set
\[
\frb_w = \frh \oplus \frn_w = \frh \oplus \bigoplus_{\alpha \in \Delta^+} \frg_{w\alpha},
\]
and write
\[
\frn_w^\opp = \bigoplus_{\alpha \in \Delta^+} \frg_{-w\alpha}.
\]
for the opposite nilradical.  Finally, let
\[
C_w = B\cdot \frb_w \subset \caB.
\]
Under the identification of $\caB$ with $G/B$, $C_w = B\dot w B$ where $\dot w$ is a representative of $w$ in $N_G(\frh)$.  

The tangent space at $\frb_w$ in $\caB$ canonically identifies with $\frg/\frb_w$.  Since the Lie algebra of the stabilizer in $B$ of the orbit $C_w$ through
$\frb_w$ is $\frb \cap \frb_w$, we have \begin{equation}
\label{e:bruhat-tangent-space}
T_{\frb_w}(C_w) = \frb / (\frb \cap \frb_w)
\end{equation}
canonically.
Thus the image of 
\begin{equation}
\label{e:bruhat-complement}
\frn_w^\opp \cap \frn^\opp
\end{equation}
in $T_{\frb_w}(\caB)$ is a complement to $T_{\frb_w}(C_w)$ in $T_{\frb_w}(\caB)$
and so
\[
S_w = \left \{\exp(A) \cdot \frb_w \; | \; A\in \frn_w^\opp \cap \frn^\opp \right \}
\]
is a smooth subvariety meeting $C_w$ transversally at $\frb_w$, i.e.~a normal slice to $C_w$ at $\frb_w$.  

\begin{remark}
When $G = GL(n,\bbC)$, it's sometimes convenient to use the map $A \mapsto \Id + A$ in place of the exponential map, 
\[
S'_w =  \left \{ (\Id + A) \cdot \frb_w \; | \; A \in \frn_w^\opp \cap \frn^\opp \right \}.
\]
\end{remark}

\begin{remark}
\label{rem:bruhat-slice-coordinates}
For specific applications, it is important to write down the slice in local coordinates.  For example, suppose $G = \GL(n,\bbC)$.  The subspace $\dot w N^\opp \subset G$ is an affine subspace of dimension $\binom{n}{2}$ with an obvious system of coordinates coming from the matrix entries of $N^\opp$.   The restriction of the natural map $G \rightarrow G/B$ to $\dot w N^\opp$ is an open immersion mapping $\dot w$ in $\dot w N^\opp$ to $\dot w B$.  The inverse map $\rho$ taking $\dot w n^\opp B$ to $\dot w n^\opp$ defines a system of coordinates with origin centered at $\dot w B$.  Note that in the identification $\caB$ with $G/B$, 
\[
S'_w = \left \{ (\Id + A) \dot wB\; | \; A \in \frn_w^\opp \cap \frn^\opp \right \} \subset  \dot w N^\opp B.
\]
So its image in $\dot w N^\opp$ under $\rho$ consists of the matrices
\[
 \left \{ (\Id + A) \dot w\; | \; A \in \frn_w^\opp \cap \frn^\opp \right \}.
 \]
 (If we fix $\frb$ to be upper triangular (and take $w=x$), this is the slice denoted $N_x$ after equation (3.1) in \cite[Section 3.2]{williamson}.)

\end{remark}

\subsection{$K$ orbits}
\label{s:korbits}
In the setting of Section \ref{s:bruhat}, let $K$ be the fixed points of an involutive automorphism $\theta$ of $G$. 
Write $\theta$ for the induced involution of $\frg$, and $\frg = \frk \oplus \frp$ for the corresponding eigenspace 
decomposition.  Write $\mu$ for the moment map (for the $G$ action) of $T^*\caB$.

The set of $K$ orbits on $\caB$ is finite.  Write $\Gamma$ for a set parametrizing these orbits.  (This can be made
explicit when necessary.)  For $\gamma \in \Gamma$, let $\frb_\gamma = g_\gamma\cdot \frb = \frh_\gamma \oplus \frn_\gamma$ denote a representative of the $K$ orbit $Q_\gamma$ parametrized by $\gamma.$   We assume, as we may, that $\frb_\gamma$ is chosen so that $\frh_\gamma$ is $\theta$-stable.

Since the Lie algebra of the stabilizer in $K$ of $\frb_\gamma$ is $\frk \cap \frb_\gamma$, the tangent space to $Q_\gamma$ at $\frb_\gamma$  is
\[
T_{\frb_\gamma}(Q_\gamma) = \frk/(\frk \cap \frb_\gamma) \subset \frg/\frb_\gamma = T_{\frb_\gamma}(\caB).
\]
Since $\frh_\gamma$ is $\theta$-stable,
\begin{equation}
\label{e:k-complement}
 \frn^\opp_\gamma\cap \frp
 \end{equation}
 is a complementary subspace to $ \frk/(\frk \cap \frb_\gamma)$ in  $\frg/\frb_\gamma$.  (Note however that $T_{\frb_\gamma}(Q_\gamma)$ need {\em not} identify with $\frn_\gamma^\opp \cap \frk$ in general.)
Thus
\[
S_\gamma = \left \{\exp(A) \cdot \frb_\gamma \; | \; A \in \frn_\gamma^\opp \cap \frp \right \}
\]
is a smooth subvariety meeting $Q_\gamma$ transversally at $\frb_\gamma$, i.e.~a normal slice to $Q_\gamma$ at $\frb_\gamma$.    

\begin{remark}
\label{k-slice-coordinates}
Again when $G = GL(n,\bbC)$, it's sometimes useful to instead use
\begin{equation}
\label{e:upq-slice}
S'_\gamma =  \left \{ (\Id + A) \cdot \frb_\gamma \; | \; A \in \frn_\gamma^\opp \cap \frp \right \}.
\end{equation}
By analogy with the discussion in Remark \ref{rem:bruhat-slice-coordinates}, the slice identifies with the matrices
\begin{equation}
\label{e:upq-slice2}
 \left \{ (\Id + A) g_\gamma \; | \; A \in \frn_\gamma^\opp \cap \frp \right \}
 \end{equation}
 in the affine subspace $g_\gamma N^\opp$ of $G$. 
\end{remark}

\subsection{Setting for applications to cohomological induction} 
\label{s:coh-induction}

In the setting of Section \ref{s:korbits}, let $\frq = \frl\oplus \fru$ denote a $\theta$-stable parabolic of $\frg$ containing the fixed basepoint $\frb \in \caB$, and write $L$ for the normalizer in $G$ of $\frl$.   
(It would have been more customary to use $\mathfrak{q}$
for a $\theta$-stable parabolic, but we have used $Q$ elsewhere.)
Write $\caB^\frq \subset \caB$ for the preimage of $K\cdot \frq$ under the natural projection $\pi$ from $\caB$ to $G\cdot \frq$, the partial flag
variety of $G$ conjugates of $\frq$, taking $\frb$ to $\frq$.

Clearly $\caB^\frq$ is smooth and $K$ invariant, and fibers over $K\cdot \frq$ with fiber equal to the variety $\caB_\frl$ of Borel subalgebras of $\frl$.  More explicitly, note that $\caB_\frl \simeq L/(L \cap B)$ maps injectively into $\caB^\frq$ by mapping $\frb \cap \frl$ to $\frb$ and extending $L$ equivariantly; concretely $\ell (L\cap B) \in L/(L\cap B)$ maps to $\ell B \in G/B$.  Call the resulting map $i_\frq$,
\begin{equation}
\label{e:iq}
i_\frq \; : \; \caB_\frl \lra \caB^\frq.
\end{equation}  
Next fix an $L \cap K$ orbit $O_\frl$ on $\caB_\frl$ through a Borel $\frb_{\frl}$ in $\frl$, and let $O$ denote the (single) $K$ orbit $K\cdot i_\frq(O_\frl)$ through $\frb_\gamma := i_\frq(\frb_\frl)$; so $\frb_{\frl} = \frl \cap \frb_\gamma$.
Write $\frq_\gamma = \frl_\gamma \oplus \fru_\gamma$ for the conjugate of $\frq$ containing $\frb_\gamma$, i.e. $\frq_\gamma = \pi(\frb_\gamma)$.

Since $\pi$ is a fibration, we have that
\[
T_{\frb_\gamma}(\caB^\frq) = T_{\frq_\gamma}(K\cdot \frq) \oplus T_{\frl \cap \frb_\gamma}(\caB_\frl).
\]
We can apply the construction of Section \ref{s:korbits} to find a normal slice $S_\frl$ to $O_\frl$ in $\caB_\frl$ at $\frl \cap \frb_\gamma$.  So we can further decompose the
tangent space as
\[
T_{\frb_\gamma}(\caB^\frq) = T_{\frq_\gamma}(K\cdot \frq_\gamma) \oplus T_{\frl \cap \frb_\gamma}(O_\frl)  \oplus T_{\frl \cap \frb_\gamma}(S_\frl).
\]
We recognize the first two summands as $T_{\frb_\gamma}(O)$.
In other words,
\begin{align*}
T_{\frb_\gamma}(\caB^\frq) 
&= T_{\frb_\gamma}(O) \oplus T_{\frb_\frl}(S_\frl)\\
&= T_{\frb_\gamma}(O) \oplus T_{\frb_\gamma}(i_\frq(S_\frl)).
\end{align*}
It follows that 
\begin{equation}
\label{e:sliceq}
\begin{split}
\text{\em if $S_\frl$ is a normal slice to $O_\frl$ in $\caB_\frl$ at $\frb_\frl$, then}\\
\text{\em $i_\frq(S_\frl)$ is a normal slice to $O$ at $\frb_\gamma = i_\frq(\frb_\frl)$ in $\caB^\frq$}.
\end{split}  
\end{equation}

\subsection{Characteristic cycles and cohomological induction}
\label{s:coh-induction-app}
In the setting of Section \ref{s:coh-induction}, 
define a bijection
\[
i_\frq^* \; : K \bs \caB^\frq \lra (L\cap K) \bs \caB_\frl 
\]
via
\begin{equation}
\label{e:iq-orbits}
i_\frq^*(O) = i_\frq^{-1}\left ( O \cap i_\frq(\caB_\frl) \right ).
\end{equation}
(Since $i_\frq$ is $L \cap K$ equivariant, it is clearly a well-defined injection.  To see $i_\frq^*$ is surjective, given any $L \cap K$ orbit $O_\frl$ in $\caB_\frl$ note
that $O_\frl = i_\frq^*(K\cdot i_\frq(O_\frl))$.)

Let $\caL$ denote an irreducible $K$ equivariant
local system supported on a $K$ orbit $Q$ in $\caB^\frq$ and set $Q_\frl := i_\frq^*(Q)$.
Consider the pullback $i_\frq^*\caL$ of $\caL$ via the restriction of $i_\frq$ to $Q_\frl$.
(It would be more precise to denote this pullback by $(i_\frq |_{Q_\frl})^*\caL$, but we will drop the restriction
from the notation.)
Then $i_\frq^*\caL$ is an irreducible $L \cap K$ equivariant local system supported on $Q_\frl$.  
The characteristic cycles of $i_\frq^*\caL$ and $\caL$ are equivalent in a very simple way:

\begin{prop}
\label{p:coh-induction}
With notation as in the previous paragraph, write
\begin{equation}
\label{e:ccL}
\CC(\IC^\cdot(Q, \caL)) = \sum_{O \in  K \bs \caB} m(O,\caL) [\ol{T_{O}^*\caB}]
\end{equation}
for the characteristic cycle of the intersection cohomology sheaf on the closure $\overline Q$ with coefficients
in $\caL$.  Similarly write
\begin{equation}
\label{e:ccLl}
\CC(\IC^\cdot(Q_\frl, i_\frq^*\caL)) = \sum_{O_\frl \in (L\cap K) \bs \caB_\frl} m_\frl(O_\frl,i_\frq^*\caL) [\ol{T_{O_\frl}^*\caB_\frl}].
\end{equation}
Recall the bijection of orbits given by \eqref{e:iq-orbits}.  Then for each $K$ orbit $O$ on $\caB^\frq$, 
\begin{equation}
\label{e:coh-induction1}
m(O,\caL) = m_\frl(i_\frq^*(O),i_\frq^*\caL).
\end{equation}
Moreover if $O$ is a $K$ orbit on $\caB$ which is not contained in $\caB^\frq$, then
\begin{equation}
\label{e:coh-induction2}
m(O,\caL) = 0.
\end{equation}
\end{prop}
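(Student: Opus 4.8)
The plan is to reduce everything to a purely local statement about characteristic cycles and intersection cohomology sheaves, exploiting the fibration structure $\pi : \caB^\frq \to K\cdot\frq$ with fiber $\caB_\frl$ together with the transversality statement \eqref{e:sliceq}. First I would recall the Kashiwara--Dubson/microlocal description of the multiplicities $m(O,\caL)$: for a $K$ orbit $O$ on $\caB$ with generic point $\frb_\delta$ and a normal slice $S_\delta$ to $O$ at $\frb_\delta$, the coefficient $m(O,\caL)$ is the (suitably signed) Euler characteristic of the stalk of the Morse/local cohomology of $\IC^\cdot(\overline Q,\caL)$ computed transversally along $S_\delta$ — equivalently, it is an intrinsic invariant of the germ of the pair $(\overline Q, O)$ inside the ambient smooth variety near $\frb_\delta$, together with the local system. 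This is exactly the content invoked in the introductory \texttt{comment} block and is the form of the definition I would cite.

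With that in hand, \eqref{e:coh-induction2} comes almost for free: if $O$ is a $K$ orbit on $\caB$ not contained in $\caB^\frq$, then since $\caB^\frq$ is closed ($\pi$ is proper and $K\cdot\frq$ is closed in the partial flag variety, being a single $K$ orbit which is automatically closed here) and $K$-invariant, $O$ is disjoint from $\overline Q \subset \caB^\frq$; hence $\overline {T_O^*\caB}$ is not contained in the characteristic variety of $\IC^\cdot(Q,\caL)$ and $m(O,\caL)=0$. I would phrase this as: the support of $\IC^\cdot(Q,\caL)$ lies in $\caB^\frq$, so its singular support lies in $T^*_{\caB^\frq}\caB$, and the only conormal bundles $\overline{T_O^*\caB}$ meeting that are those with $O\subset\caB^\frq$; this handles \eqref{e:coh-induction2} and shows the sum in \eqref{e:ccL} is really indexed by $K$ orbits inside $\caB^\frq$.

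For \eqref{e:coh-induction1}, fix a $K$ orbit $O$ on $\caB^\frq$ and set $O_\frl = i_\frq^*(O)$, with basepoint $\frb_\gamma = i_\frq(\frb_\frl)$. Two ingredients combine. First, because $i_\frq$ identifies $\caB_\frl$ with a smooth slice to the $K\cdot\frq$-direction inside $\caB^\frq$ (the fibration splitting $T_{\frb_\gamma}(\caB^\frq) = T_{\frq_\gamma}(K\cdot\frq)\oplus T_{\frl\cap\frb_\gamma}(\caB_\frl)$ used in Section \ref{s:coh-induction}), restriction along $i_\frq$ of an $\IC$ sheaf that is $K$-equivariant — hence constant along the $K\cdot\frq$-fibration direction — recovers, up to shift, the $\IC$ sheaf $\IC^\cdot(Q_\frl, i_\frq^*\caL)$ on $\overline{Q_\frl}\subset\caB_\frl$; this is a standard compatibility of $\IC$ with smooth pullback once one checks $i_\frq^{-1}(\overline Q) = \overline{Q_\frl}$, which follows from $i_\frq^*$ being an order-isomorphism of the two (finite) orbit posets. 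Second — and this is the crucial point — by \eqref{e:sliceq}, a normal slice to $O$ at $\frb_\gamma$ in $\caB^\frq$ is exactly $i_\frq(S_\frl)$ for $S_\frl$ a normal slice to $O_\frl$ at $\frb_\frl$ in $\caB_\frl$; and a normal slice to $O$ in the full flag variety $\caB$ is obtained from the one in $\caB^\frq$ by further crossing with the transverse direction to $\caB^\frq$, along which $\IC^\cdot(Q,\caL)$ is (cohomologically) constant. Hence the local Morse datum computing $m(O,\caL)$ along a slice in $\caB$ equals the local Morse datum computing $m_\frl(O_\frl, i_\frq^*\caL)$ along $S_\frl$ in $\caB_\frl$, up to an overall shift that is the same for every orbit and therefore cancels in the normalization $m_Q(Q,\caL)=1=m_{Q_\frl}(Q_\frl,i_\frq^*\caL)$. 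This gives \eqref{e:coh-induction1}.

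The main obstacle I expect is the bookkeeping of shifts and orientations: making precise that the restriction $i_\frq^*\IC^\cdot(Q,\caL)$ is $\IC^\cdot(Q_\frl,i_\frq^*\caL)$ up to the correct shift (this needs $i_\frq$ to factor as the composite of the smooth inclusion of a fiber of a fibration, which is clean, with the observation that $K$-equivariance kills the base direction), and that the transverse slice to $\caB^\frq$ inside $\caB$ contributes only a trivial local factor. Once one commits to the microlocal/Morse-theoretic definition of $\CC$ (as in \cite{gm}), these are formal, but they must be stated carefully; alternatively one can bypass shifts entirely by arguing directly with the intrinsic characterization of $m(O,\caL)$ as the multiplicity of $\overline{T_O^*\caB}$ in the singular support, which is manifestly local and transverse-slice invariant, and noting that passing from $\caB^\frq$ to $\caB$ multiplies both sides by the same conormal factor. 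I would present it the second way to keep the argument short.
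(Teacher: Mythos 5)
Your proposal is correct and takes essentially the same route as the paper: the proof there also fixes the normal slice $S_\frl$ from Section \ref{s:coh-induction}, uses \eqref{e:sliceq} together with the isomorphism $\overline{Q_\frl}\cap S_\frl \simeq \overline{Q}\cap i_\frq(S_\frl)$ given by $i_\frq$, and invokes the slice description of characteristic cycles from \cite[II.6.A]{gm}, with \eqref{e:coh-induction2} following from $\overline{Q}\subset\caB^\frq$. If anything you are more careful than the paper on two points it leaves implicit --- that $\caB^\frq$ is closed (so that $\overline{Q}\subset\caB^\frq$), and that a normal slice to $O$ in $\caB$ differs from one in $\caB^\frq$ only by a transverse direction along which the sheaf, being extended by zero off $\caB^\frq$, contributes trivially.
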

\begin{proof}
Fix a $K$ orbit $O$ in $\caB^\frq$.  Let $S_\frl$ be the normal slice to $O_\frl = i_\frq^*(O)$ constructed in Section \ref{s:coh-induction}.  Then
\[
\overline {Q_\frl} \cap S_\frl \simeq \overline{Q} \cap i_\frq(S_\frl)
\]
with the isomorphism (from left to right) given by $i_\frq$.  Since \eqref{e:sliceq} shows that $i_\frq(S_\frl)$ is a normal slice to $O$, the definition of characteristic cycles in terms of normal
slices \cite[II.6.A]{gm}, then implies \eqref{e:coh-induction1}.
Since the closure of $Q$ is contained in $\caB^\frq$,  \eqref{e:coh-induction2}
 is clear.
\end{proof}

It is useful to translate this into a representation theoretic statement.  
Let $\caD_\frl$ denote the sheaf 
of algebraic differential operators on $\caB_\frl$, and let $Z$ be an irreducible $(\frl,L \cap K)$ module
with trivial infinitesimal character.  
Write
\[
\caZ = \caD_\frl \otimes_{U(\frg)} Z,
\]
a Harish-Chandra sheaf of (holonomic) $(\caD_\frl, L\cap K)$ modules.  Choose an $L \cap K$ invariant
good filtration $\caZ^\bullet$ on $\caZ$.  The associated graded object $gr(\caZ^\bullet)$ defines a coherent sheaf
on $T^*\caB_\frl$ supported on the conormal variety $T^*_{L\cap K}\caB_\frl$.  Its support cycle therefore defines
an $\bbN$-linear combination of conormal bundles to $L \cap K$ orbits on $\caB_\frl$ called the characteristic cycle
of $Z$,
\begin{equation}
\label{e:ccZ}
\CC(Z) = \sum_{O_\frl \in  (L\cap K) \bs \caB_\frl} M_\frl(O_\frl,Z) [\ol{T_{O_\frl}^*\caB_\frl}].
\end{equation}
Let $\caL_{\frl, Z}$ be the irreducible $L \cap K$ equivariant local system corresponding to $Z$ via the Beilinson-Bernstein correspondence.  (The natural correspondence
is with an irreducible $L \cap K$ equivariant flat connection supported on a single $L \cap K$ orbit.  To pass to a local system, here and elsewhere, we will use the bijections explained, for example, in~\cite[Section 2]{ic3}.)
As explained in \cite[II.6.A]{gm},
the terms in \eqref{e:ccLl} and \eqref{e:ccZ} coincide,
\begin{equation}
\label{e:cc-coincideZ}
m_\frl(O_\frl,\caL_{\frl,Z}) = M_\frl(O_\frl,Z).
\end{equation}

Now let $X =\caR_\frq(Z)$ be the (irreducible) $(\frg,K)$ module with trivial infinitesimal character obtained from $Z$ by cohomological induction in the good range 
as in \cite[Section 0.7]{knapp-vogan}, and write
\begin{equation}
\label{e:ccX}
\CC(X) = \sum_{O \in  K \bs \caB} M(O,X) [\ol{T_{O}^*\caB}].
\end{equation}
Once again if $\caL_{X}$ is the irreducible $K$ equivariant local system on $\caB$
 corresponding to $X$, then the terms
 in \eqref{e:ccL} and \eqref{e:ccX} coincide,
\begin{equation}
\label{e:cc-coincideX}
m(O,\caL_X) = M(O,X).
\end{equation}
The key point is that under these hypotheses, $\caL_X$ is supported on $\caB^\frq$ and 
\begin{equation}
\label{e:LP-coh-induction}
\caL_{\frl,Z} = i_\frq^*\caL_X.
\end{equation}
This is well-known to experts.  It follows, for example, by combining the description of cohomological induction in the Langlands classification (e.g. \cite[XI.10]{knapp-vogan}) 
with \cite{ic3}.

Thus \eqref{e:LP-coh-induction} says that we are in the setting of Proposition \ref{p:coh-induction} (with $\caL_X$ playing
the role of $\caL$).  This proposition
together with \eqref{e:cc-coincideZ} and \eqref{e:cc-coincideX} immediately imply

\begin{corollary}
\label{c:coh-induction-rep}
If $Z$ is an irreducible $(\frl,L \cap K)$ module with trivial infinitesimal character and (as above) $X=\caR_\frq(Z)$ is the irreducible $(\frg,K)$ module with trivial
infinitesimal character obtained from $Z$ by  cohomological induction in the good range, then the characteristic cycles of $Z$ and $X$ are related as follows.  In the notation of 
\eqref{e:ccX} and \eqref{e:ccZ},
\begin{equation}
\label{e:coh-induction-rep1}
M(O,X) = M_\frl(i_\frq^*(O),Z)
\end{equation}
for each $K$ orbit $O$ in $\caB^\frq$.
Moreover if $O$ is not contained in $\caB^\frq$, then
\begin{equation}
\label{e:coh-induction-rep2}
 M(O,X) = 0.
\end{equation}
\end{corollary}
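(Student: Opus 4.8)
The plan is to deduce Corollary \ref{c:coh-induction-rep} by assembling three inputs already in place: Proposition \ref{p:coh-induction}, the Beilinson--Bernstein translation between characteristic cycles of Harish-Chandra modules and those of $\IC$ sheaves (equations \eqref{e:cc-coincideZ} and \eqref{e:cc-coincideX}), and the geometric description \eqref{e:LP-coh-induction} of cohomological induction at the level of local systems. Concretely, I would first observe that $\caL_X$, the irreducible $K$-equivariant local system attached to $X = \caR_\frq(Z)$ by Beilinson--Bernstein, is supported on a $K$-orbit $Q$ contained in $\caB^\frq$; this is part of the content recalled around \eqref{e:LP-coh-induction} and follows from matching cohomological induction with the Langlands classification together with the sheaf-theoretic picture in \cite{ic3}. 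Thus $\caL_X$ is exactly the kind of local system to which Proposition \ref{p:coh-induction} applies.

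Next I would set $Q_\frl := i_\frq^*(Q)$ and use \eqref{e:LP-coh-induction}, which identifies $\caL_{\frl,Z} = i_\frq^*\caL_X$ as an irreducible $L\cap K$-equivariant local system on $Q_\frl$. Feeding $\caL = \caL_X$ into Proposition \ref{p:coh-induction} gives, for each $K$-orbit $O$ on $\caB^\frq$,
\[
m(O,\caL_X) = m_\frl\big(i_\frq^*(O), i_\frq^*\caL_X\big) = m_\frl\big(i_\frq^*(O), \caL_{\frl,Z}\big),
\]
and $m(O,\caL_X) = 0$ whenever $O \not\subset \caB^\frq$. Now I rewrite both sides representation-theoretically: the left side is $M(O,X)$ by \eqref{e:cc-coincideX}, and the right side is $M_\frl(i_\frq^*(O),Z)$ by \eqref{e:cc-coincideZ} (applied with $O_\frl = i_\frq^*(O)$). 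This yields \eqref{e:coh-induction-rep1}, and the vanishing statement \eqref{e:coh-induction-rep2} is just the translation of \eqref{e:coh-induction2} through \eqref{e:cc-coincideX}. The argument is therefore a short bookkeeping chain once the two dictionaries and Proposition \ref{p:coh-induction} are granted.

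The only genuine subtlety — and hence the step I would flag as the main obstacle — is justifying \eqref{e:LP-coh-induction}, i.e.\ that the Beilinson--Bernstein local system of the cohomologically induced module $\caR_\frq(Z)$ pulls back under $i_\frq$ to the Beilinson--Bernstein local system of $Z$. The excerpt asserts this is ``well-known to experts'' and sketches that it follows by combining the behavior of cohomological induction under the Langlands classification (e.g.\ \cite[XI.10]{knapp-vogan}) with the functorial properties of the Beilinson--Bernstein correspondence established in \cite{ic3}. In writing the proof I would either cite those two references and leave the verification as stated, or, if more care were desired, recall that geometrically $\caR_\frq$ corresponds to a $\caD$-module pushforward along the fibration $\caB^\frq \to K\cdot\frq$ composed with the closed inclusion $\caB^\frq \hookrightarrow \caB$, under which the $\IC$ extension of $i_\frq^*\caL_X$ on $\overline{Q_\frl}$ matches the $\IC$ extension of $\caL_X$ on $\overline{Q}$ — the same geometric compatibility already exploited in the proof of Proposition \ref{p:coh-induction} via normal slices. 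Beyond that point, the corollary is an immediate consequence and requires no new computation; one simply states the two displayed identities.

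\begin{proof}
By \eqref{e:LP-coh-induction}, the irreducible $K$-equivariant local system $\caL_X$ attached to $X = \caR_\frq(Z)$ is supported on a $K$-orbit $Q$ contained in $\caB^\frq$, and its pullback satisfies $i_\frq^*\caL_X = \caL_{\frl,Z}$, the irreducible $L\cap K$-equivariant local system attached to $Z$, supported on $Q_\frl := i_\frq^*(Q)$. Hence the hypotheses of Proposition \ref{p:coh-induction} hold with $\caL = \caL_X$. Applying \eqref{e:coh-induction1} with $O_\frl = i_\frq^*(O)$ and then \eqref{e:cc-coincideX} and \eqref{e:cc-coincideZ} gives, for each $K$-orbit $O$ on $\caB^\frq$,
\[
M(O,X) = m(O,\caL_X) = m_\frl\big(i_\frq^*(O), i_\frq^*\caL_X\big) = m_\frl\big(i_\frq^*(O), \caL_{\frl,Z}\big) = M_\frl\big(i_\frq^*(O), Z\big),
\]
which is \eqref{e:coh-induction-rep1}. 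If $O$ is a $K$-orbit on $\caB$ not contained in $\caB^\frq$, then \eqref{e:coh-induction2} gives $m(O,\caL_X) = 0$, so $M(O,X) = 0$ by \eqref{e:cc-coincideX}, which is \eqref{e:coh-induction-rep2}.
\end{proof}
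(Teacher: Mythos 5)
Your proof is correct and follows exactly the paper's own argument: the corollary is deduced by feeding $\caL_X$ into Proposition \ref{p:coh-induction} via \eqref{e:LP-coh-induction} and then translating both sides through the dictionaries \eqref{e:cc-coincideX} and \eqref{e:cc-coincideZ}. Your flagging of \eqref{e:LP-coh-induction} as the only nontrivial input matches the paper, which likewise defers it to \cite[XI.10]{knapp-vogan} and \cite{ic3}.
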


\section{main results}
\label{s:main}

Let $G = \GL(n,\bbC)$, fix $p+q=n$, and  let $\theta$ be given by conjugating by the matrix,
\[
\left (
\begin{matrix}
I_p & 0 \\
0 & -I_q
\end{matrix}
\right )
\]
where $I_p$ is the $p \times p$ identity matrix (and similarly for $I_q$).  Then $\frk = \frg\frl(p,\bbC) \oplus \frg\frl(q,\bbC)$ is embedded block diagonally, and $\frp$ consists of the off diagonal blocks.  

In the following theorem, recall that the characteristic variety $\CV(\IC^\cdot(Q,\bbC))$ is defined to be the support of $\CC(\IC^\cdot(Q,\bbC))$.

\begin{theorem}
\label{t:main}
\begin{enumerate}
\item[(a)]
Let $n=8$ and $p=q=4$.  Then there are $K$ orbits $Q_\eta$  and $Q_\gamma$ on $\caB$ of respective dimensions
24 and 20 such that
\[
\CV(\IC^\cdot(Q_\eta,\bbC)) \supset  \overline {T_{Q_\eta}^*\caB} \; \cup \overline {T_{Q_\gamma}^*\caB}.
\]
and
\begin{equation}
\label{e:moment-map-a}
\mu(\ol{T_{Q_\eta}^*\caB}) \supsetneq \mu(\ol{T_{Q_\gamma}^*\caB}).
\end{equation}
In the notation of \cite[Theorem 2.2.8]{yamamoto}, the orbit $Q_\eta$ is parametrized by $(12324341)$, and the orbit $Q_\gamma$ is parametrized by $(12213443)$; see also
Section \ref{s:moment-map}  below.

\item[(b)]
Let $n=12$ and $p=q=6$.  Then there are $K$ orbits $Q$ and $Q'$ on $\caB$ of respective dimensions 42 and 38 such that
\[
\CV(\IC^\cdot(Q,\bbC)) \supset  \overline {T_Q^*\caB} \; \cup \overline {T_{Q'}^*\caB}.
\]
and
\begin{equation}
\label{e:moment-map-b}
\mu(\ol{T_{Q'}^*\caB}) = \mu(\ol{T_Q^*\caB}).
\end{equation}
In the notation of \cite[Theorem 2.2.8]{yamamoto}, $Q$ is parametrized by $1^+2^+(34546563)7^-8^-$, and $Q'$ is parametrized by $1^+2^+(34435665)7^-8^-$; see also
Section \ref{s:moment-map} below.

\end{enumerate}
\end{theorem}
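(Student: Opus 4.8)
The plan is to reduce both parts to the analysis of a single eight-dimensional normal slice, following the program laid out in the introduction. For part (a), I would first use the Wyser--Yong description \cite{wyser-yong} of the closure of the $K=\GL(4,\bbC)\times\GL(4,\bbC)$ orbit $Q_\eta$ parametrized by $(12324341)$, together with the construction of Section~\ref{s:korbits} (concretely, the affine realization \eqref{e:upq-slice2}), to write down explicitly the intersection $Y$ of $\ol{Q_\eta}$ with the normal slice $S'_\gamma$ to $Q_\gamma$ at $\frb_\gamma$. Here $\gamma$ is the orbit $(12213443)$, and since $\dim Q_\eta-\dim Q_\gamma=24-20=4$, the slice $\frn_\gamma^\opp\cap\frp$ will be $8$-dimensional and $Y$ will be a $4$-dimensional subvariety of $\bbC^8$; this is the content of Lemma~\ref{l:maina-2}. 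Next I would produce the cleaner model $Z$ of Lemma~\ref{new} by an explicit change of coordinates, so that the singularity at the origin is manifest. Then I would apply Braden's criterion \cite{braden} (as packaged in Lemma~\ref{l:braden}): the point is to exhibit, at a suitable smooth point of a smaller orbit closure lying in $\ol{Q_\eta}$, a contracting/repelling $\bbC^\times$-action on the relevant transverse slice whose attracting and repelling fixed-point data force a nonzero multiplicity $m_{Q_\gamma}(\ol{Q_\eta})\neq 0$. Because $\IC$-multiplicities are computed in normal slices \cite[II.6.A]{gm}, this multiplicity is exactly the corresponding local invariant of $Z$, giving $\overline{T^*_{Q_\eta}\caB}\cup\overline{T^*_{Q_\gamma}\caB}\subset\CV(\IC^\cdot(Q_\eta,\bbC))$ (Lemma~\ref{l:maina-3}), which is the first assertion of (a). The strict inclusion of moment map images \eqref{e:moment-map-a} is then a separate, essentially combinatorial computation: one computes $\mu(\overline{T^*_{Q_\eta}\caB})$ and $\mu(\overline{T^*_{Q_\gamma}\caB})$ as closures of $K$-saturations of $\frn_\eta\cap\frp$ and $\frn_\gamma\cap\frp$ (equivalently, identifies the associated nilpotent $K$-orbits, e.g.\ via signed Young diagrams, as recalled in Section~\ref{s:moment-map}), and checks the containment is proper by comparing dimensions or Jordan types.

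For part (b), the idea is purely formal given Corollary~\ref{c:coh-induction-rep}. I would exhibit a $\theta$-stable parabolic $\frq=\frl\oplus\fru$ in $\frg\frl(12,\bbC)$ with Levi factor $\frl\cong\frg\frl(1,\bbC)^{\oplus 4}\times\frg\frl(4,\bbC)\times\frg\frl(4,\bbC)$ (the four $\frg\frl(1)$'s accounting for the "outer" labels $1^+,2^+,7^-,8^-$), so that $L\cap K$-orbits on the flag variety $\caB_\frl$ of $\frl$ match up, under the bijection $i_\frq^*$ of \eqref{e:iq-orbits}, with the $\GL(4)\times\GL(4)$ orbits on the $\GL(8)$-flag variety from part (a): concretely $i_\frq^*(Q)=Q_\eta$ and $i_\frq^*(Q')=Q_\gamma$, where $Q,Q'$ are the orbits parametrized by $1^+2^+(34546563)7^-8^-$ and $1^+2^+(34435665)7^-8^-$. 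The dimension bookkeeping matches: $\dim Q-\dim Q_\eta=\dim Q'-\dim Q_\gamma$ equals the dimension of the fiber of $\caB^\frq\to K\cdot\frq$ plus the contribution of the flag directions fixed by the outer labels, and indeed $42-24=38-20=18$. Then \eqref{e:coh-induction1} of Proposition~\ref{p:coh-induction} gives $m_{Q'}(\ol Q)=m_{Q_\gamma}(\ol{Q_\eta})\neq 0$ and $m_{Q}(\ol Q)=1$, yielding $\overline{T^*_Q\caB}\cup\overline{T^*_{Q'}\caB}\subset\CV(\IC^\cdot(Q,\bbC))$, which is the first assertion of (b).

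It remains to prove the equality of moment map images \eqref{e:moment-map-b}, and this is the one place where (b) is genuinely different from (a) rather than a formal consequence. The mechanism is that cohomological induction in the good range adds the same nilradical contribution $\fru\cap\frp$ to both associated cones: $\mu(\overline{T^*_Q\caB})=\overline{K\cdot(\frn_\eta\cap\frp\;+\;\fru\cap\frp)}$ and likewise for $Q'$ with $\frn_\gamma$ in place of $\frn_\eta$. Whereas in $\frg\frl(8)$ the $K$-saturations of $\frn_\eta\cap\frp$ and $\frn_\gamma\cap\frp$ have different (proper) closures, I claim that after adding $\fru\cap\frp$ and $K$-saturating inside $\frg\frl(12)$ the two closures become equal --- intuitively, the extra four $\GL(1)$-directions fill in exactly the Jordan-type gap that separated the two nilpotent orbits in the $\GL(8)$ picture. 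I would verify this by computing the two signed Young diagrams (equivalently, the $\GL(p,q)$-nilpotent orbits) explicitly from the clans $1^+2^+(34546563)7^-8^-$ and $1^+2^+(34435665)7^-8^-$, using Section~\ref{s:moment-map}, and observing they coincide. The main obstacle in the whole argument is unquestionably the explicit slice computation plus the verification of Braden's hypotheses in part (a) --- that is where the actual geometry of the Kashiwara--Saito-type singularity has to be pinned down; everything else is either bookkeeping (the $\theta$-stable parabolic in (b)) or a finite nilpotent-orbit combinatorics check (the moment map statements).
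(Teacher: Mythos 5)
Your outline follows the paper's proof essentially step for step: Wyser--Yong equations restricted to the slice of Remark \ref{k-slice-coordinates} to get $Y$ (Lemma \ref{l:maina-2}), the model $Z$ (Lemma \ref{new}), Braden's criterion (Lemma \ref{l:braden}) for the reducibility, signed-tableau combinatorics for the moment map claims, and Proposition \ref{p:coh-induction} to deduce (b) from (a). Two remarks. First, in part (b) your Levi is mis-stated: $\frl$ must contain a $\frg\frl(8,\bbC)$ factor whose intersection with $K$ is $\GL(4,\bbC)\times\GL(4,\bbC)$ --- the paper takes $\frl\cong\frg\frl(2)\oplus\frg\frl(8)\oplus\frg\frl(2)$ --- whereas writing $\frl\cong\frg\frl(1)^{\oplus 4}\oplus\frg\frl(4)\oplus\frg\frl(4)$ literally would make $\caB_{\frl}$ a product of two $\GL(4)$ flag varieties and destroy the identification $i_\frq^*(Q)=Q_\eta$; you have conflated $\frl$ with $\frl\cap\frk$, and this needs to be fixed for the reduction to (a) to go through. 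Second, you correctly isolate the verification of Braden's hypothesis as the crux but do not indicate how to carry it out; the paper does this not via attracting/repelling fixed-point data but by exhibiting the group $H=\SL(2)\times\GL(2)\times\bbC^\times$ acting on $Z$ with dense orbit and producing an explicit conormal covector $\xi$ at the generic point whose $H$-orbit is $7$-dimensional, so that $\overline{T^*_{Z^{\sm}}V}\cap T^*_{\{0\}}V$ has codimension one in $V^*\cong\bbC^8$ (Lemma \ref{l:maina-3}). Everything else in your proposal, including the mechanism by which $\fru\cap\frp$ equalizes the two moment map images in (b), matches the paper.
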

 
\subsection{Proof of Theorem \ref{t:main}(a)}
\label{s:proofa}
We will proceed with a sequence of lemmas.

We begin with a precise description of the orbit $Q_\gamma$ in the theorem.  
Retain the notation of Section \ref{s:coh-induction}.  Let $\frq_\gamma = \frl_\gamma \oplus \fru_\gamma$ denote the $\theta$-stable parabolic defined as the sum of the nonnegative eigenspaces of $\ad(x_\gamma)$ where
\[
x_\gamma \; = \; 
\left (
\begin{matrix}
I_2  \\
& -I_2\\
& & I_2\\
& & & -I_2
\end{matrix}
\right )\; ;
\]
here $\frl_\gamma$ is the zero eigenspace.  

Recall the projection $\pi \; : \; \caB \rightarrow G\cdot \frq$ taking $\frb$ to $\frq$, and let $\frb_\gamma$ denote a generic element in $\pi^{-1}(\frq_\gamma)$.  Set
\begin{equation}
\label{e:orbita}
Q_\gamma = K\cdot \frb_\gamma.
\end{equation}

From the above description of $\frb_\gamma$ and an understanding of $U(1,1)$, we see that we can write $\frb_\gamma = g_\gamma\cdot \frb$ where
\begin{equation}
\label{e:g-gamma}
g_\gamma \: = \; \left (
\begin{matrix}
1 & 0 & 0 & -1 & 0 & 0 & 0 & 0\\
0 & 1 & -1 & 0 & 0 & 0 & 0 & 0\\
0 & 0 & 0 & 0 & 1 & 0 & 0 & -1\\
0 & 0 & 0 & 0 & 0 & 1 & -1 & 0\\
1 & 0 & 0 & 1 & 0 & 0 & 0 & 0\\
0 & 1 & 1 & 0 & 0 & 0 & 0 & 0\\
0 & 0 & 0 & 0 & 0 & 1 & 1& 0\\
0 & 0 & 0 & 0 & 1 & 0 & 0 & 1\\
\end{matrix}
\right )
\end{equation}
Then orbit $Q_\gamma$ is parametrized by what Yamamoto calls the clan $(12213443)$.

Next we turn to understanding a suitable normal slice for $Q_\gamma$ as in Remark \ref{k-slice-coordinates} explicitly in an
affine coordinate patch.  This is summarized as follows.
\begin{lemma}
\label{l:maina}
For $Q_\gamma$ as in \eqref{e:orbita}, the matrices of \eqref{e:upq-slice2} are
\begin{equation}
 \label{e:u44-slice}
\left (
\begin{matrix}
1   &  0&    0&    -1&   0&    0&    0&    0 \\
 0   & 1&    -1&    0&   0&    0&    0&    0\\
 x_2  &  x_1 &   x_1 &   x_2 &   1 &   0  &  0 &   -1\\
 x_4  &  x_3  &  x_3 &   x_4 &   0 &   1  &  -1 &  0 \\
1  &  0 &   0  &  1  &  0 &   0 &   0 &   0\\
0  &  1  &  1 &   0 &   0 &   0  &  0  &  0\\
y_1  &  y_2 &   -y_2 &  -y_1 &  0 &   1 &   1 &   0\\
y_3  &  y_4 &   -y_4 &  -y_3 &  1  &  0 &   0 &   1
\end{matrix}
\right )\; .
\end{equation}
According to  Remark \ref{k-slice-coordinates}, this is the normal slice $S_\gamma'$  in the natural 
coordinates on $g_\gamma N^-$.
\end{lemma}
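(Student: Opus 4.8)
The plan is to unwind the definitions in Remark~\ref{k-slice-coordinates} for the specific orbit $Q_\gamma = K\cdot\frb_\gamma$ with $\frb_\gamma = g_\gamma\cdot\frb$ and to exhibit an explicit basis of the complement space $\frn_\gamma^\opp\cap\frp$. Recall from \eqref{e:upq-slice2} that the slice consists of the matrices $(\Id+A)g_\gamma$ where $A$ ranges over $\frn_\gamma^\opp\cap\frp$; so the two things to pin down are (i) the eight-dimensional space $\frn_\gamma^\opp\cap\frp$ written as a space of matrices, and (ii) the effect of right multiplication by $g_\gamma$ on $\Id+A$.

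First I would compute $\frn_\gamma^\opp = g_\gamma\cdot\frn^\opp$, i.e.\ conjugate the lower-triangular nilradical by $g_\gamma$; concretely $\frn_\gamma^\opp = \{\, g_\gamma N g_\gamma^{-1} \mid N \in \frn^\opp\,\}$ at the group level, or $\Ad(g_\gamma)\frn^\opp$ at the Lie algebra level. Then I would intersect with $\frp$, the space of off-diagonal $4\times 4$ blocks $\left(\begin{smallmatrix} 0 & B \\ C & 0\end{smallmatrix}\right)$. Because $\dim(\frn_\gamma^\opp\cap\frp) = \dim Q_\gamma^{\mathrm{cplt}}$ must match the codimension of $Q_\gamma$ in $\caB$ (which is $8$, since $\dim\caB = 28$ and $\dim Q_\gamma = 20$), I expect an eight-parameter family, and the claim is that it is spanned by the matrices supplying the entries $x_1,\dots,x_4$ in the lower-left block of rows $3,4$ and $y_1,\dots,y_4$ in rows $7,8$ of \eqref{e:u44-slice} after the right multiplication by $g_\gamma$ is carried out. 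The cleanest way to verify is: take the general element $\Id + A$ with $A$ a general element of $\frp$ subject to the constraint $A \in \frn_\gamma^\opp$, and multiply on the right by $g_\gamma$ of \eqref{e:g-gamma}; the resulting matrix should be exactly \eqref{e:u44-slice}, with the parametrization forced. Alternatively — and this is the check I would actually write down — one verifies directly that \eqref{e:u44-slice} equals $(\Id+A)g_\gamma$ for the appropriate $A\in\frp$ by computing $(\Id+A) = \bigl(\text{matrix in }\eqref{e:u44-slice}\bigr)\cdot g_\gamma^{-1}$ and confirming that the result is of the form $\Id + A$ with $A$ off-block-diagonal and with $A g_\gamma$-conjugate-upper-triangular, i.e.\ $A\in\frn_\gamma^\opp$.

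Thus the steps, in order, are: (1) write down $g_\gamma^{-1}$ explicitly (it is block-structured and easy to invert, since $g_\gamma$ is built from $2\times 2$ blocks of the form $\left(\begin{smallmatrix}1 & -1\\ 1 & 1\end{smallmatrix}\right)$ up to permutation, so $g_\gamma^{-1} = \tfrac12 g_\gamma^{t}$ up to reordering); (2) right-multiply the matrix in \eqref{e:u44-slice} by $g_\gamma^{-1}$ and check the product has the form $\Id + A$ with $A\in\frp$; (3) check that $g_\gamma^{-1}A g_\gamma$, equivalently $\Ad(g_\gamma^{-1})A$, is strictly upper triangular, i.e.\ $A\in\frn_\gamma^\opp$ — this confirms that the eight vectors parametrizing \eqref{e:u44-slice} genuinely lie in $\frn_\gamma^\opp\cap\frp$; and (4) observe the parameter count is $8 = \operatorname{codim} Q_\gamma$ and that the eight matrices are linearly independent, so by \eqref{e:upq-slice} and \eqref{e:upq-slice2} of Remark~\ref{k-slice-coordinates} they span $\frn_\gamma^\opp\cap\frp$ and \eqref{e:u44-slice} is the whole slice $S_\gamma'$.

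The main obstacle is purely computational bookkeeping: correctly conjugating an $8\times 8$ matrix by $g_\gamma$ and keeping track of which off-diagonal entries survive the intersection with $\frp$ and which combinations land in $\frn_\gamma^\opp$. There is a small conceptual point to get right, namely that $T_{\frb_\gamma}(Q_\gamma)$ is \emph{not} simply $\frn_\gamma^\opp\cap\frk$ (as flagged in the parenthetical remark after \eqref{e:k-complement}), so one must genuinely use the $\theta$-stability of $\frh_\gamma$ — which holds here because $g_\gamma$ was chosen via the $\theta$-stable parabolic $\frq_\gamma$ and the structure of $U(1,1)$ — to know that $\frn_\gamma^\opp\cap\frp$ is a true complement to the tangent space. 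Once that is in hand the verification is a finite matrix computation that I would present as a displayed equation $(\Id+A)g_\gamma = \eqref{e:u44-slice}$ and leave the entry-by-entry check to the reader.
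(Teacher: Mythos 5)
Your proposal is correct and would yield a complete proof, but it identifies the space $\frn_\gamma^\opp\cap\frp$ by a different (and more laborious) route than the paper. The paper's key observation is that, because $\frb_\gamma$ is chosen generically in the fiber $\pi^{-1}(\frq_\gamma)$ over the $\theta$-stable parabolic $\frq_\gamma=\frl_\gamma\oplus\fru_\gamma$, one has $\frn_\gamma^\opp\cap\frp=\fru_\gamma^\opp\cap\frp$ (the Levi contribution $\frn_\gamma^\opp\cap\frl_\gamma\cap\frp$ vanishes precisely because the $L\cap K$ orbit of $\frb_\gamma\cap\frl_\gamma$ is open in $\caB_{\frl_\gamma}$). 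Since $\fru_\gamma^\opp$ is cut out by the block-diagonal element $x_\gamma$, the intersection $\fru_\gamma^\opp\cap\frp$ is read off instantly from the block structure — it is exactly the eight matrix positions displayed in \eqref{e:u44-slice-1} — with no conjugation by $g_\gamma$ required. One then multiplies $\Id+A$ on the right by $g_\gamma$, exactly as in your final step, to land in the coordinates of Remark \ref{k-slice-coordinates}. Your alternative — computing $\Ad(g_\gamma)\frn^\opp\cap\frp$ directly, or verifying a posteriori that the eight candidate directions lie in $\frn_\gamma^\opp\cap\frp$ and invoking the count $8=\operatorname{codim}Q_\gamma$ — is sound (the complement property from Section \ref{s:korbits}, which needs the $\theta$-stability of $\frh_\gamma$ that you correctly flag, gives $\dim(\frn_\gamma^\opp\cap\frp)=8$), but it forces you to conjugate by $g_\gamma$ a second time. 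Two small cautions: with the convention that $\frb$ is the upper-triangular Borel, membership in $\frn_\gamma^\opp$ means $\Ad(g_\gamma^{-1})A$ is strictly \emph{lower} triangular, not upper; and $g_\gamma^{-1}=\tfrac12 g_\gamma^{tr}$ holds on the nose (the columns of $g_\gamma$ are pairwise orthogonal of squared length $2$), so that part of your computation is as easy as you hope.
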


\begin{proof}[Proof of Lemma \ref{l:maina}]
Consider
\begin{equation}
\label{e:u44-slice-1}
 \{ \Id + A \; | \; A \in \fru^\opp_\gamma \cap \frp \}  \; = \; 
\left (
\begin{matrix}
1 & 0 & 0 &0 & 0 &0 &0 & 0\\
0 & 1 & 0 &0 & 0 &0 &0 & 0\\
0 & 0 & 1 & 0 &x_2 & x_1 & 0 & 0 \\
0 & 0 & 0 & 1 &x_4 & x_3 & 0 & 0 \\
0 & 0 & 0 & 0 &1 & 0 & 0 & 0 \\
0 & 0 & 0& 0 &0& 1& 0 & 0 \\
y_1 & y_2  & 0 &0 & 0 &0 &1 & 0\\
y_3 & y_4& 0 &0 & 0 &0 &0 & 1\\
\end{matrix}
\right )
\end{equation}
where the $x$ and $y$ coordinates are indeterminate complex variables.

Since $\frb_\gamma$ is generic, $\frb^\opp_\gamma \cap \frp = \frq^\opp_\gamma \cap \frp$.  So the set of matrices in \eqref{e:u44-slice-1} acting on $\frb_\gamma$ gives the slice $S'_\gamma$ in $\caB$ given in \eqref{e:upq-slice}.  To realize this slice in the natural coordinates on  $g_\gamma N^\opp$ described in Remark \ref{k-slice-coordinates},
we need to multiply the matrices in \eqref{e:u44-slice-1} on the right by the matrix in \eqref{e:g-gamma}.  This is the 
set of matrices that appears in \eqref{e:u44-slice}.
\end{proof}

\medskip

In the coordinates of Lemma \ref{l:maina}, we aim to see the intersection of the slice $S'_\gamma$ with the closure of larger orbits $Q_\eta$ in order to analyze the singularity of the closure along 
$Q_\gamma$.  For the particular $\eta$ appearing in Theorem \ref{t:main}(a), we summarize the result as follows.

\begin{lemma}
\label{l:maina-2}
Fix $Q_\gamma$ and $Q_\eta$ as in Theorem \ref{t:main}(a).  Then the intersection $Y$ of $\overline Q_\eta$ with the normal
slice $S'_\gamma$ of \eqref{e:upq-slice} in the coordinates of Lemma \ref{l:maina} is cut out by the following equations:
\begin{equation}
\label{e: a}
\mathrm{rank}\left( 
\begin{matrix}
x_1  &  x_2  \\
  x_3  &  x_4\\
  y_2 & y_1\\
  y_4 & y_3 
  \end{matrix}
\right  )\; \leq \; 1
\end{equation}
\begin{equation}
\label{e: b}
\mathrm{rank}\left( 
\begin{matrix}
x_1  &  x_3  \\
  x_2  &  x_4  \\
  y_3  &  y_1  \\
   y_4  &  y_2\\
  \end{matrix}
\right ) \; \leq \; 1.
\end{equation}
\end{lemma}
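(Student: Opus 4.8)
The plan is to compute the intersection $Y = \overline{Q_\eta} \cap S'_\gamma$ directly from the defining equations of $\overline{Q_\eta}$. Since $Q_\eta$ and $Q_\gamma$ are $K$-orbits on the flag variety for $\GL(8,\bbC)$, I would first invoke the results of Wyser--Yong \cite{wyser-yong}, which give explicit rank-condition equations cutting out closures of $\GL(p,\bbC)\times\GL(q,\bbC)$-orbit closures in terms of the combinatorial data of the clan parametrizing the orbit (here $(12324341)$ for $Q_\eta$). The slice $S'_\gamma$ has already been written down in explicit matrix coordinates in Lemma \ref{l:maina}, as the family of matrices in \eqref{e:u44-slice} with free parameters $x_1,\dots,x_4,y_1,\dots,y_4$ (the image of the slice in $g_\gamma N^-$, which we view as an affine chart of $\caB = G/B$).

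The key computational step is then to substitute the matrix \eqref{e:u44-slice} into the Wyser--Yong rank conditions for $\overline{Q_\eta}$ and simplify. Concretely, each Wyser--Yong equation for a clan is a condition that a certain submatrix (formed from initial segments of columns of the flag, read against the fixed flag defining $K$) has rank at most a prescribed value; pulling these back along the parametrization \eqref{e:u44-slice} of the slice turns each such condition into a rank condition on an explicit matrix whose entries are linear in the $x_i,y_j$. After row and column reductions — using the identity-block structure visible in \eqref{e:u44-slice} to kill the trivial entries — I expect all the Wyser--Yong conditions to collapse to the two independent rank-$\leq 1$ conditions \eqref{e: a} and \eqref{e: b}, which one recognizes as determinantal varieties on the two $4\times 2$ matrices built from the slice coordinates. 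One must also check the reverse inclusion, i.e.\ that a point of the slice satisfying \eqref{e: a} and \eqref{e: b} actually lies in $\overline{Q_\eta}$ and not merely in some larger variety; this amounts to confirming that the pulled-back Wyser--Yong equations generate (not just contain) the ideal of $Y$, or alternatively dimension-counting: the variety cut out by \eqref{e: a}--\eqref{e: b} should have the expected dimension $\dim Q_\eta - \dim Q_\gamma = 4$ matching the slice dimension minus codimension of $\overline{Q_\eta}$ in $\caB$.

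The main obstacle I anticipate is the bookkeeping in the pullback step: the Wyser--Yong equations are indexed by pairs (row position, column position) subject to the clan combinatorics, and there are many of them, most of which become redundant on the slice. Organizing which submatrices actually matter — and verifying that the surviving relations really are exactly \eqref{e: a} and \eqref{e: b} rather than a proper sub- or super-set — is where care is needed. A secondary subtlety is that the slice coordinates in \eqref{e:u44-slice} involve the permuted/signed entries $-y_1,-y_2$ in certain rows (coming from the structure of $g_\gamma$ and the $U(1,1)$ blocks), so one must be careful about signs when assembling the $4\times 2$ matrices in \eqref{e: a}--\eqref{e: b}; this explains why the $y$-entries appear in the orders $(y_2,y_1),(y_4,y_3)$ and $(y_3,y_1),(y_4,y_2)$ rather than the naive orderings. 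Once the pullback is carried out and the redundancies eliminated, the statement follows. I would present the verification of a few representative Wyser--Yong equations in detail and indicate that the remaining ones are handled identically, since a fully exhaustive computation would be long and routine.
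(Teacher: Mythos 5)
Your proposal follows essentially the same route as the paper: pull back the Wyser--Yong rank conditions for the clan $(12324341)$ to the explicit slice coordinates of Lemma \ref{l:maina}, organize the (mostly redundant) equations indexed by pairs $(i,j)$, and verify that the resulting ideal coincides with the one generated by the $2\times 2$ minors in \eqref{e: a} and \eqref{e: b}. The paper carries out exactly this plan, settling the ideal-equality step (your ``reverse inclusion'' worry) by a Macaulay2 computation recorded in the appendix, supplemented by a hand verification that the pairs $(2,4)$, $(3,6)$, $(4,6)$ supply all the nontrivial relations.
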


\begin{remark}
\label{r:alternate-equations}
There is some overlap in the equations given by \eqref{e: a} and \eqref{e: b}.  For example, $x_1x_4-x_2x_3 = 0$ and
$y_1y_4-y_2y_3 = 0$ appear in both.  (The symmetric formulation given in the lemma will be useful below.)  It is easy
to verify the equations given by \eqref{e: a} and \eqref{e: b} are equivalent to the equations given by \eqref{e: a} and
\begin{equation}
\label{e: c}
\det
\left (
\begin{matrix}
x_1 & x_3 \\
y_4 & y_2 
\end{matrix}
\right) \; = \; 
\det \left ( \begin{matrix}
x_1 & x_3 \\
y_3 & y_1 
\end{matrix}
\right) \; = \; 
\det
\left (
\begin{matrix}
x_2 & x_4 \\
y_3 & y_1 
\end{matrix}
\right) \; = \;  0
\end{equation}
This time, there is no overlap between the equations given by \eqref{e: a} and \eqref{e: c}.
\end{remark}

\begin{proof}[Proof of Lemma \ref{l:maina-2}]
Let $Z_\eta$ be the inverse image of $\overline{Q_\eta}$ in $G$ under the natural projection from $G$ to $G/B$.  Just as the case of ordinary Schubert varieties for $\GL(n,\bbC)$, it 
is possible to describe equations on $G$ that cut out $Z_\eta$.  This is done in \cite[Lemma 3.5]{wyser-yong}.  In the notation
of \cite[Section 3]{wyser-yong}, let $Q_\eta$ be parametrized by the clan (12324341).  Following \cite[Section 3.1]{wyser-yong}, there are three kinds of equations to consider.  The first kind (under the heading (ii) in \cite[Section 3.1]{wyser-yong}) amounts to requiring that the upper-left four-by-four submatrix of \eqref{e:u44-slice} has zero determinant,
which reduces to 
\begin{equation}
\label{e:second}
\mathrm{det}\left( 
\begin{matrix}
y_1  &  y_3  \\
  y_2  &  y_4 
  \end{matrix}
\right ) \; = \; 0.
\end{equation}

 The second kind (under the heading (i) in  \cite[Section 3.1]{wyser-yong}) amounts to requiring that the lower-left four-by-four submatrix of \eqref{e:u44-slice} has zero determinant,
which reduces to 
\begin{equation}
\label{e:first}
\mathrm{det}\left( 
\begin{matrix}
x_1  &  x_3  \\
  x_2  &  x_4 
  \end{matrix}
\right  )\; = \; 0.
\end{equation}

The third kind of equations (under the heading (iii) in  \cite[Section 3.1]{wyser-yong}) are indexed by pairs $i<j$, and not all pairs 
give rise to nontrivial equations.   In our example, the ones that do are the pairs $(2,4), (2,5), (2,6), (3,4),(3,5),(3,6),(4,5)$ and $(4,6)$; and for 
each such pair $(i,j)$, one requires that the minors of a certain size (which turns out to be $\mathrm{min}(8,i+j)$ in this example) of a certain 
$8\times(i+j)$ matrix are zero.  When restricted to the slice in \eqref{e:u44-slice}, the $8\times(i+j)$ matrix in question has 
its first $i$ column consisting of the first $i$ columns of the matrix in  \eqref{e:u44-slice} except that the last $4$ rows are 
zeroed out; and the last $j$ columns consists of the first $j$ columns of \eqref{e:u44-slice}.

The lemma can now be verified by computer using Macaulay2: one simply writes down the matrices described the previous paragraph; computes the ideal
generated by the indicated minors; adds the ideal generated by \eqref{e:second} and \eqref{e:first}; and verifies that the result coincides with the ideal generated by the
two-by-two minors of the matrices in \eqref{e: a} and \eqref{e: b}.   (Though it doesn't matter, the two ideals are in fact equal; no radicals need to be taken.)  The details
are given in Section \ref{s:appendix}.  For completeness, we now sketch how to perform these calculations by hand.

The only nontrivial relations (beyond \eqref{e:second} and \eqref{e:first}) corresponding to the pairs $(2,4)$, $(2,5)$, $(2,6)$, $(3,4)$, $(3,5)$ all appear in the conditions 
corresponding to the pair $(2,4)$ in heading (iii) of \cite[Section 3.1]{wyser-yong}.  The requirement is
that the rank $6$ minors of
\begin{equation*}
\left (
\begin{matrix}
1   &  0&   1   &  0&    0&    -1 \\
 0   & 1& 0   & 1&    -1&    0\\  
 x_2  &  x_1 &  x_2  &  x_1 &   x_1 &   x_2 \\ 
 x_4  &  x_3  &    x_4  &  x_3  &  x_3 &   x_4 \\
0 & 0 & 1 &  0 &   0  &1 \\
0 &   0  & 0  &  1  &  1 &0    \\
0  &  0 &    y_1  &  y_2 &   -y_2 &  -y_1 \\
0 &  0 &    y_3  &  y_4 &   -y_4 &  -y_3  
\end{matrix}
\right)
\end{equation*}
\smallskip

\noindent
vanish.  
This is equivalent to \eqref{e:first} and \eqref{e:second} together with 
\begin{equation}
\label{e:third}
\det
\left (
\begin{matrix}
x_1 & x_2 \\
y_2 & y_1
\end{matrix}
\right )
\; = \;
\det
\left (
\begin{matrix}
x_4 & x_3 \\
y_1 & y_2
\end{matrix}
\right )
\; = \;
\det
\left (
\begin{matrix}
x_2 & x_1 \\
y_3 & y_4
\end{matrix}
\right )
\; = \;
\det
\left (
\begin{matrix}
x_4 & x_3 \\
y_3 & y_4
\end{matrix}
\right )
\; = \; 0.
\end{equation}
The six equations given by \eqref{e:second}, \eqref{e:first}, and \eqref{e:third}
are exactly the vanishing of the six two-by-two minors of the matrix in \eqref{e: a}.  Thus \eqref{e:second}, \eqref{e:first}, and \eqref{e:third}
are equivalent to  \eqref{e: a}.

\smallskip

The Wyser-Yong minor equations for the pair $(4,5)$ turn out to be implied for those corresponding to the pair $(3,6)$.  For the latter, the requirement is that
the rank eight minors of the following matrix vanish:
\begin{equation}
\label{e:wy36}
\left (
\begin{matrix}
1   &  0&    0&   1   &  0&    0&    -1&   0& 0\\
 0   & 1&    -1& 0   & 1&    -1&    0&   0& 0\\  
 x_2  &  x_1 &   x_1 &  x_2  &  x_1 &   x_1 &   x_2 &   1& 0\\ 
 x_4  &  x_3  &  x_3 &    x_4  &  x_3  &  x_3 &   x_4&   0& 1\\
0 & 0 & 0 & 1 &  0 &   0  &1 & 0& 0 \\
0 &   0 & 0 & 0  &  1  &  1 &0 &   0& 0   \\
0  &  0 &   0 & y_1  &  y_2 &   -y_2 &  -y_1 &  0& 1\\
0 &  0 &   0 &  y_3  &  y_4 &   -y_4 &  -y_3 &  1& 0 
\end{matrix}
\right ) 
\end{equation}
This is equivalent to \eqref{e:second}, \eqref{e:first}, and \eqref{e:third} together with 
\begin{equation}
\label{e:forth-2} 
\det
\left (
\begin{matrix}
x_1 & x_3 \\
y_4 & y_2 
\end{matrix}
\right) \; = \; 
\det \left ( \begin{matrix}
x_1 & x_3 \\
y_3 & y_1 
\end{matrix}
\right) \; = \; 0.
\end{equation}
The Wyser-Yong equations for the pair $(4,6)$ require the vanishing of the rank eight minors
of the 8-by-10 matrix obtained from \eqref{e:wy36} by adding an additional column consisting 
of the fourth column of \eqref{e:u44-slice} with its last four
entries replaced by zeros.  This turns out to introduce one further equation:
\begin{equation}
\label{e:forth-3}
\det
\left (
\begin{matrix}
x_2 & x_4 \\
y_3 & y_1 
\end{matrix}
\right) \; = \;  0
\end{equation}
Since \eqref{e:forth-2} and \eqref{e:forth-3} are exactly \eqref{e: c}, Remark \ref{r:alternate-equations} completes the proof.
\end{proof}

\vskip .2in

Next we give  an isomorphic realization of $Y$ (which will also be useful when relating $Y$
to the Kashiwara-Saito singularity below).    We need to introduce some notation.
Let $M(2,\bbC)$ denote the set of $2 \times 2$
complex matrices and set $V = M(2,\bbC)^2$.  Write
\begin{equation}
\label{e:jt}
J= \left ( \begin{matrix} 0 & -1 \\ 1 & 0 \end{matrix} \right ) \text{ and }
T = \left ( \begin{matrix} 0 & 1 \\ 1 & 0 \end{matrix} \right ).
\end{equation}

\begin{definition}\label{group}
Define 
\begin{equation*}
H= H_1 \times H_2 \times \bbC^\times :=   \SL(2, \bbC)\times \GL(2,\bbC)\times \bbC^\times.
\end{equation*}

Let $H$  act on pairs of matrices $(A_1,A_2) \in V=M(2,\bbC)^2$ via
\begin{equation}
\label{e:h-action}
(h_1,h_2,z) \cdot (A_1,A_2) =
\left ( z h_2^*A_1h_1^{-1},h_2A_2h_1^{-1} \right );
\end{equation}
here
we define $g^*$ for an invertible matrix with determinant $\Delta$ to be
\[
\left (
\begin{matrix}
a & b \\
c & d
\end{matrix}
\right )^*
\; = \;
\frac{1}{\Delta}
\left (
\begin{matrix}
a & -b \\
-c & d
\end{matrix}
\right ).
\]
Since the action is linear,  $H$ acts on $V^*$ in the usual way: given $\xi \in V^*$, $h\cdot \xi$ evaluated at $v$ is $\xi(h^{-1}\cdot v)$. 

\end{definition}
\vskip .2in

We use the coordinates of the matrix entries to identify $V$ and $V^*$; the pairing of $V^*$ with $V$ then becomes the dot product of matrix entries (that is, the sum
of the products of corresponding matrix entries).  That is, 
\begin{align}\label{e:htransp-action}
V & \to V^*\\ \nonumber
(A,B) &\to \xi_{(A,B)} \text{ where } \xi_{(A,B)}(X, Y) = \text{Tr}(A^{tr} X)  + \text{Tr}(B^{tr} Y).
\end{align}

Under this identification the action of $H$ on $V^*$ becomes
\begin{equation*}
(h_1,h_2,z) \cdot (A_1,A_2) =
\left ( z {({h_2^*}^{-1})}^{tr} A_1 h_1^{tr}, {(h^{-1}_2)}^{tr} A_2 h_1^{tr} \right ).
\end{equation*}

\begin{lemma}\label{new}
Let $\varphi :  \bbC^8 \rightarrow M(2,\bbC)^2$  be given by
$$
\varphi(x_1, \dots, x_4,y_1, \ldots, y_4) \; = \; \left (
\left (
\begin{matrix}
y_3 &-y_4 \\
x_2 & -x_1
\end{matrix}
\right )
\; , \;  
\left (
\begin{matrix}
y_1 &-y_2 \\
-x_4 & x_3
\end{matrix}
\right )
\right ).
$$
\begin{enumerate}
\item The image of $Y$ under the isomorphism $\varphi$ is 
  \begin{align*}
Z  :=  \varphi(Y) =   \{ (A_1,A_2) \in M(2,\bbC)^2& \:   | \; \det(A_1) = \det(A_2) = 0; 
 A_1JA_2^{tr} =  0; \\ & \text{ and }  A_2^{tr} TA_1 = \left (\begin{matrix} 0 & 0\\0 & 0 \end{matrix}\right)  \}.
\end{align*}

\item The group $H$ preserves $Z$ and the smooth locus $Z^\sm $. Moreover,  the $H$ orbit of 
\[
v_\varepsilon \; := \; 
\left(
\left (
\begin{matrix}
0 & \varepsilon \\
0 & 0
\end{matrix}
\right )
 \; , \;
\left (
\begin{matrix}
0 & \varepsilon \\
0 & 0
\end{matrix}
\right ) 
\right ) 
\]
is dense in $Z$ whenever $\varepsilon \neq 0$.
\end{enumerate}
\end{lemma}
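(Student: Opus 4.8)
The plan is to prove (1) by an explicit ideal comparison and (2) by elementary $2\times 2$ matrix algebra together with a concrete description of $Z$.

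For (1): first I would note that $\varphi$ is a linear isomorphism $\bbC^8 \to M(2,\bbC)^2$, since each of $x_1,\dots,x_4,y_1,\dots,y_4$ occurs exactly once, up to sign, among the eight matrix entries of $\varphi(x,y)$; so it suffices to check that $\varphi$ carries the ideal of $Y$ (generated by the $2\times 2$ minors of the matrices in \eqref{e: a} and \eqref{e: b}) onto the ideal of $Z$. I would compute the pullbacks under $\varphi$ of the generators of the ideal of $Z$: $\varphi^*(\det(A_1)) = x_2y_4 - x_1y_3$; $\varphi^*(\det(A_2)) = x_3y_1 - x_4y_2$; the four entries of $\varphi^*(A_1JA_2^{tr})$, namely $y_2y_3 - y_1y_4$, $x_3y_3 - x_4y_4$, $x_1y_1 - x_2y_2$, $x_1x_4 - x_2x_3$; and the four entries of $\varphi^*(A_2^{tr}TA_1)$, namely $x_2y_1 - x_4y_3$, $x_1y_1 - x_4y_4$, $x_2y_2 - x_3y_3$, $x_1y_2 - x_3y_4$. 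Each of these polynomials is either one of the $2\times 2$ minors of \eqref{e: a} or \eqref{e: b}, or a $\bbC$-linear combination of two of them (for example $x_1y_1 - x_4y_4 = (x_1y_1 - x_2y_2) + (x_2y_2 - x_4y_4)$); conversely the two minors $x_1y_1 - x_3y_3$ and $x_2y_2 - x_4y_4$ of \eqref{e: b} that do not occur among these pullbacks are themselves such linear combinations of them. Hence $\varphi$ maps the ideal of $Y$ onto the ideal of $Z$, so $\varphi(Y) = Z$. This is the by-hand form of the Macaulay2 verification mentioned after Lemma \ref{l:maina-2}.

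For (2), I would first prove that $H$ preserves $Z$, which rests on four matrix identities: for every $g \in M(2,\bbC)$ one has $g^{tr}Jg = \det(g)\,J$; with $D = \mathrm{diag}(1,-1)$ the definition of $g^*$ rewrites as $g^* = \det(g)^{-1}DgD$, whence $\det(g^*) = \det(g)^{-1}$; and $TD = J$, $JD = T$. Indeed, if $(A_1',A_2') = (h_1,h_2,z)\cdot(A_1,A_2)$, then $\det(A_1') = z^2\det(h_2)^{-1}\det(A_1)$ and $\det(A_2') = \det(h_2)\det(A_2)$ (using $\det(h_1) = 1$), so the conditions $\det(A_i) = 0$ are preserved; moreover $A_1'JA_2'^{tr} = z\,h_2^*(A_1JA_2^{tr})h_2^{tr}$, because $h_1^{-1}J(h_1^{-1})^{tr} = J$ for $h_1 \in \SL(2,\bbC)$; and $A_2'^{tr}TA_1' = z\,(h_1^{-1})^{tr}(A_2^{tr}TA_1)h_1^{-1}$, because $h_2^{tr}Th_2^* = \det(h_2)^{-1}h_2^{tr}(TD)h_2D = \det(h_2)^{-1}h_2^{tr}Jh_2D = JD = T$. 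So all three defining conditions of $Z$ survive the action, i.e.~$H\cdot Z \subseteq Z$; since $H$ is a group this forces $H\cdot Z = Z$, and since $H$ acts through linear automorphisms of $V$ it acts on $Z$ by automorphisms, which preserve the singular locus and hence $Z^{\sm}$. It then remains to show $\overline{H\cdot v_\varepsilon} = Z$ when $\varepsilon \neq 0$. Writing $v_\varepsilon = (\varepsilon E,\varepsilon E)$ with $E$ the matrix unit in position $(1,2)$, a direct computation (using $g^* = \det(g)^{-1}DgD$, $DE = E$, and the transitivity of $\SL(2,\bbC)$ and of $\GL(2,\bbC)$ on nonzero vectors) identifies the orbit:
\[
H\cdot v_\varepsilon = \{(cDA,A)\ :\ A \in M(2,\bbC),\ \mathrm{rank}(A) = 1,\ c \in \bbC^\times\},
\]
independently of the choice of $\varepsilon \neq 0$; in particular it is $4$-dimensional. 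I would then describe $Z$ directly: if $(A_1,A_2) \in Z$ with $A_2 \neq 0$, write $A_2 = uv^{tr}$ with $u,v$ nonzero column vectors; then $A_1JA_2^{tr} = 0$ forces $A_1Jv = 0$, and with $\det(A_1) = 0$ this gives $A_1 = wv^{tr}$ for some vector $w$; then $A_2^{tr}TA_1 = 0$ forces $w^{tr}Tu = 0$, and since $(Du)^{tr}Tu = u^{tr}(DT)u = -u^{tr}Ju = 0$ while the solutions of $w^{tr}Tu = 0$ form a line, this means $w \in \bbC\,Du$, so $A_1 \in \bbC\,DA_2$; while if $A_2 = 0$ the equations of $Z$ reduce to $\det(A_1) = 0$. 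Thus
\[
Z = \{(cDA,A) : \det(A) = 0,\ c \in \bbC\}\ \cup\ \{(B,0) : \det(B) = 0\},
\]
the reverse inclusion being a direct substitution into the three equations. Finally, both pieces lie in $\overline{H\cdot v_\varepsilon}$: for $A$ of rank $1$, the point $(cDA,A)$ is in the orbit when $c \neq 0$ and a limit of orbit points when $c = 0$, and $(0,0)$ is a limit as $A \to 0$; for $B$ of rank $1$, the point $(B,0)$ equals $\lim_{c\to\infty}(B, c^{-1}DB)$ and $(B, c^{-1}DB) = (cD(c^{-1}DB),\, c^{-1}DB)$ is in the orbit whenever $c \neq 0$. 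Since $Z$ is closed, $H$-stable and contains $v_\varepsilon$, this gives $\overline{H\cdot v_\varepsilon} = Z$.

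The one delicate point in the whole argument is this structural description of $Z$ in the last step, and within it the behaviour along the loci $A_2 = 0$ and $A_1 = 0$, where the boundary points of $Z$ must be produced as explicit limits of orbit points rather than matched by a dimension count. Everything else is routine $2\times 2$ matrix bookkeeping; the unusual definition of $g^*$ has been arranged precisely so that the $H$-invariance of the three equations cutting out $Z$ drops out of $g^{tr}Jg = \det(g)J$ together with $TD = J$ and $JD = T$.
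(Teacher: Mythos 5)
Your proof is correct. For the $H$-invariance of $Z$ you use $g^* = \det(g)^{-1}DgD$ with $D=\mathrm{diag}(1,-1)$ together with $TD=J$, $JD=T$ and $g^{tr}Jg=\det(g)J$; this is equivalent to the identities the paper's (very terse) proof hints at, namely $g^*=T(g^{-1})^{tr}T$ and $\SL(2,\bbC)=\Sp(1,\bbC)$, so that part of the argument is the same in substance, just written out. Where you genuinely diverge is the density statement: the paper suggests computing $\mathrm{Stab}_H(v_\varepsilon)$ explicitly, which gives $\dim H\cdot v_\varepsilon = 4 = \dim Z$ and then requires knowing (or separately proving) that $Z$ is irreducible of dimension $4$ to conclude density. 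You instead give a complete structural description,
\[
Z \;=\; \{(cDA,A) : \det(A)=0,\ c\in\bbC\}\ \cup\ \{(B,0):\det(B)=0\},
\qquad
H\cdot v_\varepsilon = \{(cDA,A) : \mathrm{rank}(A)=1,\ c\in\bbC^\times\},
\]
and exhibit every point of $Z$ as an explicit limit of orbit points, including the boundary loci $A=0$ and $c\to\infty$. This buys a self-contained argument that does not rely on irreducibility of $Z$, and as a by-product identifies $Z$ set-theoretically, which is independently useful; the cost is the extra bookkeeping in the case analysis ($A_2\neq 0$ versus $A_2=0$), all of which checks out. Your part (1) is exactly the by-hand version of the ideal comparison that the paper delegates to Macaulay2, and the linear-combination identities you use to recover the two missing minors of \eqref{e: b} are correct.
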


\begin{proof}
The statement is elementary, and there are many ways to see this directly.  To verify part (1), it is useful to note that $g^* = T(g^{-1})^{tr}T$ and 
that $\SL(2,\bbC) = \Sp(1,\bbC) = \{ g \; | \; g^{tr}Jg=J\}.$ 
For part (2) one can compute the stabilizer of $v_\varepsilon$ explicitly, for example.
\end{proof}
\vskip .2in

The vector space $\bbC^8$ (with coordinates $x_1,y_1,\dots, x_4,y_4$) inherits an algebraic Whitney stratification (with connected strata) from the $K$ orbits on
$\caB$, and the subvariety $Y$ cut out by the equations of Lemma \ref{l:maina-2} is a union of strata. 
Hence the same is true for its isomorphic realization $Z$. (Note that
since the slice is transverse, $\{0\} \subset \bbC^*$ is itself a stratum.)  So we can 
consider the characteristic cycle of $Z$.  We will use the following criterion due to Braden \cite{braden} to detect its reducibility.

\begin{lemma}
\label{l:braden}

Let $Z \subset V$ be as in  Lemma \ref{new}(1).
If the smooth locus
$X^\sm$ of $X$ and $\{0\}$ intersect microlocally in codimension one, i.e.~that the closures of the conormal bundles (in $V$) to $Z^\sm$ and $\{0\}$ intersect in codimension one.  Then the conormal bundle
to $\{0\}$ appears in the characteristic cycle of $\IC^\cdot(Z,\bbC)$.
\end{lemma}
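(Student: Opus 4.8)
The plan is to invoke Braden's microlocal reducibility criterion from~\cite{braden} essentially as a black box, since its statement is precisely the content of the lemma; the only real work is to set things up so that the criterion applies. First I would recall Braden's theorem in the form we need: if $X$ is a stratified subvariety of a smooth ambient space $V$, and $S$ is a stratum in the boundary of the open stratum $X^{\sm}$ such that the closures $\overline{T^*_{X^{\sm}}V}$ and $\overline{T^*_S V}$ of the conormal bundles intersect in codimension one inside $T^*V$ (equivalently, their intersection has dimension $\dim V - 1$), then the conormal bundle $\overline{T^*_S V}$ appears with nonzero multiplicity in $\CC(\IC^\cdot(X,\bbC))$. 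The key point is that codimension-one microlocal intersection forces a nontrivial vanishing cycle contribution at $S$; this is exactly Braden's observation about the behaviour of Morse groups across a codimension-one wall, and I would cite it rather than reprove it.

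The second step is to check the hypotheses are meaningful in our situation. Here $V = \bbC^8$ (or equivalently $M(2,\bbC)^2$ via the isomorphism $\varphi$ of Lemma~\ref{new}), $X = Z$ is the variety cut out by the equations of Lemma~\ref{new}(1), and $S = \{0\}$. We have already observed, in the paragraph preceding the lemma, that $Z$ inherits a Whitney stratification with connected strata from the $K$-orbit stratification of $\caB$ pulled back through the transverse slice, and that $\{0\}$ is one of the strata; in particular $\{0\}$ lies in the closure of the open dense stratum $Z^{\sm}$ (whose density is guaranteed by Lemma~\ref{new}(2) via the $H$-orbit of $v_\varepsilon$). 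So the conormal bundles $\overline{T^*_{Z^{\sm}}V}$ and $\overline{T^*_{\{0\}}V} = \{0\} \times V^*$ are both well-defined Lagrangian subvarieties of $T^*V$, each of dimension $8$, and it makes sense to ask whether they meet in codimension one, i.e.\ in dimension $7$. The lemma's statement simply assumes this intersection condition, so at this stage there is nothing further to verify — the verification of the codimension-one condition itself is deferred to a subsequent lemma (the analysis of the conormal geometry of $Z$, which is where the $H$-action and the explicit form of the defining equations get used).

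The proof of the lemma proper is then short: apply Braden's criterion with the data just assembled. I would phrase it as: "By construction $Z$ is a union of strata of a Whitney stratification of $V$ with $\{0\}$ a stratum in $\overline{Z^{\sm}}$; the hypothesis is that $\overline{T^*_{Z^{\sm}}V}$ and $\overline{T^*_{\{0\}}V}$ meet in codimension one in $T^*V$. By~\cite[Theorem ...]{braden}, the conormal bundle $\overline{T^*_{\{0\}}V}$ occurs with positive multiplicity in $\CC(\IC^\cdot(Z,\bbC))$." The main obstacle, such as it is, is purely expository: making sure the stratification and the smooth-locus/boundary-stratum relationship are stated precisely enough that Braden's hypotheses literally match, and getting the citation to the right numbered result in~\cite{braden}. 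There is no hard estimate or construction internal to this lemma — the genuinely substantive computation, namely actually exhibiting the codimension-one microlocal intersection for our specific $Z$, is the content of the next lemma (Lemma~\ref{l:maina-3}) and not of this one.
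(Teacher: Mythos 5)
Your plan --- quote Braden's criterion and check only that the stratification set-up makes sense --- misses the actual content of the paper's (one-line) proof, because the version of Braden's theorem you recall is stronger than what Braden actually proves. Codimension-one intersection of $\overline{T^*_{Z^{\sm}}V}$ and $T^*_{\{0\}}V$ is \emph{not} by itself sufficient to force $T^*_{\{0\}}V$ into the characteristic cycle: already for a line $L$ through the origin in $\bbC^2$, the conormal bundles $\overline{T^*_{L\setminus\{0\}}\bbC^2}$ and $T^*_{\{0\}}\bbC^2$ meet in codimension one, yet $\CC(\IC^\cdot(L,\bbC))=[\overline{T^*_{L}\bbC^2}]$ has no component supported over the origin. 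What Braden's main theorem gives in general is only a constraint on a certain microlocal monodromy along the codimension-one wall; Corollary 3 of \cite{braden} upgrades this to the conclusion you want only under additional hypotheses, namely that the situation is conical (invariant under the dilation action of $\bbC^\times$) and that the relevant stratum has \emph{even} complex dimension --- the line example above fails exactly because $\dim L=1$ is odd.

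The paper's proof of this lemma consists precisely of verifying those two extra hypotheses for $Z$: it is four-dimensional (even), and it is cut out by homogeneous equations, hence stable under dilation of coordinates. Your proposal never checks either, and defers everything to ``getting the citation to the right numbered result,'' but the issue is not one of numbering: the general principle you state as Braden's theorem is false, and the evenness and $\bbC^\times$-invariance are the substantive (if easy) verifications that make Corollary 3 of \cite{braden} applicable. The remainder of your set-up --- $Z$ is a union of strata of a Whitney stratification of $V$ with $\{0\}$ a stratum in the closure of $Z^{\sm}$ --- matches the paragraph preceding the lemma in the paper and is fine as far as it goes.
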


\begin{proof}[Proof of Lemma \ref{l:braden}]
Once we note that $Z$ is even dimensional and invariant under the action of $\bbC^\times$ (by dilating coordinates), this follows from Corollary 3 of \cite{braden}.
\end{proof}

\smallskip

\noindent We now verify the hypothesis of Lemma \ref{l:braden}.

\smallskip

\begin{lemma}
\label{l:maina-3}
The smooth locus
$Z^\sm$ of $Z$ and $\{0\}$ intersect microlocally in codimension one.
\end{lemma}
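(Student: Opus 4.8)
The plan is to compute both conormal bundles inside $T^*V \cong V \times V^*$ explicitly and compare dimensions of their intersection. Since the conormal bundle to $\{0\}$ is $\{0\} \times V^* \cong V^*$, which has dimension $8$, and the conormal bundle to $Z^\sm$ has dimension $\dim V = 8$ as well, the two conormal bundles both live in the $16$-dimensional ambient $T^*V$; their intersection lies inside $\{0\} \times V^* \cong V^*$, and I must show it has dimension $7$, i.e.\ codimension one in $V^*$ (equivalently codimension one in each conormal bundle). Concretely, a covector $\xi \in V^*$ lies in this intersection if and only if $\xi$ is a limit of conormal covectors to $Z^\sm$ at points approaching $0$; equivalently $(0,\xi)$ lies in the closure $\overline{T^*_{Z^\sm}V}$. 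So the task reduces to identifying the fiber over $0$ of (the closure of) the conormal variety of $Z$ and checking it is a hypersurface in $V^*$.

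First I would use Lemma \ref{new}(2): the $H$-orbit of $v_\varepsilon$ is dense in $Z$, so $Z^\sm$ contains this orbit, and by $H$-equivariance the conormal variety $\overline{T^*_{Z^\sm}V}$ is $H$-stable. Since $Z$ is irreducible of some dimension $d$ (computable from the orbit of $v_\varepsilon$ — I expect $d = 4$, matching the ``four-dimensional subvariety of $\bbC^8$'' in the introduction), the conormal fiber at a smooth point $v_\varepsilon$ is the $(8-d)$-dimensional annihilator of $T_{v_\varepsilon}Z^\sm$. I would compute $T_{v_\varepsilon}Z$ by differentiating the defining equations of $Z$ (the two determinant conditions and the two matrix equations $A_1JA_2^{tr}=0$, $A_2^{tr}TA_1=0$) at $v_\varepsilon$, or alternatively compute the tangent space to the $H$-orbit directly from the stabilizer of $v_\varepsilon$ as suggested in the proof of Lemma \ref{new}. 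Then the conormal fiber at $v_\varepsilon$ is explicit, and $\overline{T^*_{Z^\sm}V}$ is the closure of $H \cdot (\{v_\varepsilon\} \times (\text{annihilator}))$ together with the $\bbC^\times$-dilation orbit shrinking $v_\varepsilon \to 0$.

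The key computation is then to take limits as $v_\varepsilon$ (and its $H$-translates) degenerate to $0$, collect all the covectors $\xi$ that arise, and show this locus $\Lambda_0 := \{\xi \in V^* \mid (0,\xi) \in \overline{T^*_{Z^\sm}V}\}$ has dimension exactly $7$. I would do this either by a direct parametrization — writing down families $(v(t),\xi(t))$ in the conormal variety with $v(t) \to 0$ and extracting $\lim \xi(t)$ — or by the cleaner route of using the $\bbC^\times \times H$-action: $\Lambda_0$ is $H$-stable and, since $Z$ is conical, it is also the projectivized-limit locus, which one can describe as the union of conormal directions ``at infinity.'' For the upper bound $\dim \Lambda_0 \le 7$ I would exhibit one nonzero linear function on $V^*$ vanishing on $\Lambda_0$ (some explicit quadratic-type constraint surviving the limit); for the lower bound $\dim \Lambda_0 \ge 7$ I would produce a $7$-parameter family of such limit covectors. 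Using the coordinates of Lemma \ref{new}, where $Z = \varphi(Y)$ and $Y$ is cut out by the rank-$\le 1$ conditions \eqref{e: a}--\eqref{e: b}, it may actually be easiest to work on $Y \subset \bbC^8$ directly: $Y$ is a determinantal-type variety, its conormal variety and the conormal fiber at $0$ can be read off from the classical geometry of such varieties.

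The main obstacle I anticipate is the limit computation itself: determining precisely which covectors $\xi$ occur as limits of conormals as the base point runs to $0$, and pinning the dimension of that set to be exactly $7$ rather than $6$ or $8$. This is genuinely a microlocal/degeneration calculation — the sort of thing the authors will likely verify by a Gröbner-basis computation in Macaulay2 (computing the conormal variety as an ideal, saturating, and intersecting with the ideal of $\{0\} \times V^*$) as they did for Lemma \ref{l:maina-2}, with a by-hand sketch exploiting the $H$-symmetry and the density of the $v_\varepsilon$ orbit. Once $\dim \Lambda_0 = 7$ is established, Lemma \ref{l:braden} applies verbatim and we are done; so the plan is to reduce everything to that single dimension count and then invoke Braden's criterion.
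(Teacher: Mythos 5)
Your setup is the same as the paper's: identify the fiber $U = \overline{T^*_{Z^\sm}V}\cap T^*_{\{0\}}V \subset V^*$, use the $H$-equivariance and the density of $H\cdot v_\varepsilon$ from Lemma \ref{new}(2), and reduce to showing $\dim U = 7$. But the decisive step --- actually producing a $7$-dimensional family of limit covectors --- is the thing you defer as ``the main obstacle,'' and that is precisely where the content of the lemma lies, so as it stands there is a gap. The paper closes it with two concrete moves you don't supply. First, since $H\cdot v_\varepsilon$ is dense in $Z^\sm$, the conormal fiber at $v_\varepsilon$ is just the annihilator of $T_{v_\varepsilon}(H\cdot v_\varepsilon)$, which one computes explicitly (it is the set of pairs in \eqref{e:xi-eps}); the key observation is that this tangent space is \emph{independent of} $\varepsilon$, so one can choose a single covector $\xi$ (the one in \eqref{e:xi-eps2}) lying in the conormal fiber at $v_\varepsilon$ for every $\varepsilon\neq 0$. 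The limit computation you worry about then becomes trivial: the constant family $(v_\varepsilon,\xi)$ degenerates to $(0,\xi)$, so $\xi\in U$ with no delicate analysis of which covectors survive. Second, the lower bound $\dim U\geq 7$ comes from $H$-stability of $U$ plus a stabilizer computation: $\mathrm{Stab}_H(\xi)$ is one-dimensional, so $H\cdot\xi\subset U$ has dimension $8-1=7$.

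Two smaller corrections. Your proposed route to the upper bound --- ``exhibit one nonzero linear function on $V^*$ vanishing on $\Lambda_0$'' --- would prove $\Lambda_0$ lies in a hyperplane, which is stronger than needed and generally false for such fibers; the correct upper bound is automatic, because $\overline{T^*_{Z^\sm}V}$ is irreducible of dimension $8$ and is not equal to $\{0\}\times V^*$ (it dominates $Z\neq\{0\}$), so its intersection with the $8$-dimensional Lagrangian $\{0\}\times V^*$ has dimension at most $7$. Also, no Gr\"obner-basis computation is needed here (unlike Lemma \ref{l:maina-2}); the $H$-orbit structure does all the work once the single covector $\xi$ is in hand.
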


\begin{proof}[Proof of Lemma \ref{l:maina-3}]
The action of   $H$  on $V$ and $V^*$ in Definition \ref{group} determines 
an 
action of $H$ on $T^*V = V \times V^*$.  By Lemma \ref{new}
this action preserves  the tangent bundle to $Z$ (or $Z^\sm$), and so also the
conormal bundle $T^*_{Z^\sm}(V)$ and its closure.  Thus $H$ acts on
\[
U := \overline {T_{Z^\sm}^*(V)}\cap T_{\{0\}}^*V \subset T_{\{0\}}^*V = V^*.
\]
We are trying to prove this intersection is codimension 1 in $V^*$.  To do so, we will find an element $\xi$ of $U$ whose  $H$ orbit has dimension $7$.
Since we need to write down the conormal vector $\xi$ explicitly, we need to examine the tangent space to $Z^\sm$.
Recall that $H\cdot v_\varepsilon$ is dense in $Z^\sm$; note
also that the orbit $H\cdot v_\varepsilon$ contains $v_{\varepsilon'}$ for all $\varepsilon' \neq 0$.  

One computes directly from the $H$ action that the tangent space to $H\cdot v_\varepsilon$ at $v_\varepsilon$, viewed as a subspace of $V$, consists of all matrices
\begin{equation}
\label{e:xi-eps}
\left(
\left (
\begin{matrix}
a & b\\
0 & c
\end{matrix}
\right )
 \; , \;
\left (
\begin{matrix}
a & d\\
0 & -c
\end{matrix}
\right ) 
\right ) 
\end{equation}
where $a,b,c, d\in \bbC$.  We now write down a cotangent vector $\xi \in V^*$ that vanishes on the tangent space to $H\cdot v_\varepsilon$ at $v_\varepsilon$ for all 
nonzero $\varepsilon$.
To do so we identify  $V$ and $V^*$ via  \eqref{e:htransp-action}.  With this identification in place, let
\begin{equation}
\label{e:xi-eps2}
\xi \; = \;
\left(
\left (
\begin{matrix}
1 & 0 \\
 1& 1
\end{matrix}
\right )
 \; , \;
\left (
\begin{matrix}
-1 & 0\\
 0 & 1
\end{matrix}
\right ) 
\right ) .
\end{equation}
Then the dot product (using the matrix entries) of \eqref{e:xi-eps} and \eqref{e:xi-eps2} is zero.  Since $H\cdot v_\varepsilon$ is dense in $Z^\sm$ for all nonzero $\varepsilon$,  
$\xi$ is in the 
fiber of the conormal bundle $T^*_{Z^\sm}(V)$ at $v_\varepsilon$ for all nonzero $\varepsilon$.  Passing to the closure, we conclude $\xi \in U$
as desired.  

To finish the proof we need to compute the dimension of $H\cdot \xi$.  We use the identification $V$ and $V^*$ of \eqref{e:htransp-action}  to compute the stabilizer in $H$ of $\xi$,
and find that it is one-dimensional,
\begin{equation}
\mathrm{Stab}_H(\xi) \; = \; \left \{
\pm \left (
\begin{matrix}
1 & \beta \\
 0 &   1
\end{matrix}
\right )
 \; , \;
\pm \left (
\begin{matrix}
1 & -\beta\\
 0 & 1
\end{matrix}
\right ) \; , \; 1
  \right \} \; .
\end{equation}
Since $H$ has dimension 8, the $H$ orbit of $\xi$, and hence also $U$, has dimension 7.
  \end{proof}
\smallskip
 
\noindent The reducibility statement in Theorem \ref{t:main}(a) now follows immediately from the reducibility given by Lemmas \ref{l:braden} and \ref{l:maina-3} using basic properties of characteristic cycles and normal
slices (as in \cite[II.6.A]{gm}).  Equation \eqref{e:moment-map-a} will be proved in Section \ref{s:moment-map} below. \qed

\subsection {Proof of Theorem \ref{t:main}(b)}  
\label{s:proofb}
Let $n=12$ and $p=q=6$.
Let $\frq = \frl \oplus \fru$ denote the $\theta$-stable parabolic defined as the sum of the nonnegative eigenspaces of $\ad(x)$ where
\[
x \; = \; 
\left (
\begin{matrix}
2I_2  \\
&I_8\\
& &-I_2
\end{matrix}
\right )\; ;
\]
here $\frl$ is the zero eigenspace.  
Then $\frl \simeq \frg\frl(2,\bbC) \oplus \frg\frl(8,\bbC) \oplus \frg\frl(2,\bbC)$.  Write these three factors
as $\frl = \frl_1 \oplus \frl_2 \oplus \frl_3$ and correspondingly $\caB_\frl = \caB_{\frl_1} \times \caB_{\frl_2} \times \caB_{\frl_3}$.  Then
\[
L\cap K = \GL(2,\bbC) \times \left ( \GL(4,\bbC) \times \GL(4,\bbC) \right ) \times \GL(2,\bbC);
\]
and write the three indicated factors as 
\[
L\cap K = (L\cap K)_1 \times (L\cap K) _2 \times (L\cap K)_3.
\]
Fix elements $\frb_1$ and $\frb_3$ of $\caB_{\frl_1}$ and  $\caB_{\frl_3}$ (so of course  $(L\cap K)_1\cdot \frb_1=\caB_{\frl_1}$ and similarly for  $(L\cap K)_3\cdot \frb_3$).  Fix representatives $\frb_\eta, \frb_\gamma \in 
\caB_{\frl_2}$ of the $(L \cap K)_2$ orbits $Q_\eta$ and $Q_\gamma$ from Theorem \ref{t:main}(a).  Set
\[
Q = K\cdot (\frb_1, \frb_\eta, \frb_3)
\]
and
\[
Q' = K\cdot (\frb_1, \frb_\gamma, \frb_3).
\]
Then
\[
i_\frq^*(Q) = \caB_{\frl_1} \times Q_\eta \times \caB_{\frl_3}
 \qquad 
i_\frq^*(Q') = \caB_{\frl_1} \times Q_\gamma \times \caB_{\frl_3}.
\]
Since the flag varieties $ \caB_{\frl_i}$ are of course smooth, the reducibility in Theorem \ref{t:main}(b) follows immediately from the reducibility of Theorem \ref{t:main}(a) and 
Proposition \ref{p:coh-induction}.  The description of $Q$ and $Q'$ in terms of $Q_\eta$ and $Q_\gamma$ implies
that the clans corresponding to $Q$ and $Q'$ in Yamamoto's parametrization are related to those for $Q_\eta$ and $Q_\gamma$ in the simple way indicated in the statement of (b).  Equation \eqref{e:moment-map-b} will be proved in Section \ref{s:moment-map}.

\subsection{Moment map images}
\label{s:moment-map}
We collect a few more details about the orbits appearing in Theorem \ref{t:main} and, in particular, compute the relevant 
moment map images.  

In the general setting of Section \ref{s:korbits}, the image of the moment map $\mu$ of the conormal variety $T_K^*\caB$ consists of the nilpotent elements $\caN_\theta$ in $(\frg/\frk)^* \simeq \frp$.   Since $\mu$ is $K$ equivariant and proper, and since there are finitely many orbit of $K$ on $\caN_\theta$, it is easy to see that the image of the closure of a single conormal 
bundle $T^*_Q\caB$ is the closure of a single $K$ orbit on $\caN_\theta$.

Return to the special setting at the beginning of  Section \ref{s:main}.
The clan notation is explained in \cite[Theorem 2.2.14]{yamamoto}.  It's easy to see that the clans that appear in
\cite{yamamoto} are special cases of the parametrization of $K$ orbits in terms of twisted involutions.  In this
setting, the latter are involutions in the symmetric group with certain signed fixed points (e.g.~\cite[Proposition 6.(a)]{trapa}.  Repeated entries in the clan
correspond to indices that are interchanged by the involution.  

For example, the clan $(12324341)$ corresponding
to $Q_\eta$ in Theorem \ref{t:main}(a) corresponds to the involution that interchanges the second and fourth entires, the third and sixth, the fifth and seventh, and the first and eighth,
\begin{equation}
\label{e:sigma-eta}
\sigma_\eta = (24)(36)(57)(18).
\end{equation}
Similarly 
\begin{equation}
\label{e:sigma-gamma}
\sigma_\gamma = (23)(14)(67)(58).
\end{equation}
Yamamoto gives an algorithm to compute the moment map image of the conormal bundle to the orbit parametrized by a given
clan.  A simpler algorithm due to Garfinkle \cite{devra} also computes the image using the twisted involution parametrization
\cite{trapa}.

The orbits of $K$ on $\caN_\theta$ are parametrized by signed Young tableau of signature $(p,q)$ where signs alternate
across rows, modulo interchanging rows of equal length (\cite[Theorem 9.3.3]{cm}).  Using Garfinkle's algorithm and
\cite[Theorem 7.2]{trapa}, we compute that: the dense $K$ orbit in $\mu(\overline{T^*_{Q_\eta}\caB})$
is parametrized by $3_+^1 3_-^1 1_+ 1_-$ (that is, the diagram with 1 row of length 3 beginning with $+$, one row of
3 beginning with $-$, and so on);
and the corresponding (smaller) orbit for $\sigma_\gamma$ is parametrized by $2_+^2 2_-^2$.
This proves the claim about moment map images in Theorem \ref{t:main}(a).

The twisted involutions corresponding to $Q$ and $Q'$ in Theorem \ref{t:main}(b) are
\begin{align*}
\sigma & = (1^+)(2^+)(46)(58)(79)(1\; 10)(11^-)(12^-) \text{ and }\\
\sigma' & = (1^+)(2^+)(45)(36)(89)(7 \; 10)(11^-)(12^-) .
\end{align*}
One computes that this time both moment map images are parametrized by $4_+^2 2_+^2$
proving the assertion about moment map images in Theorem \ref{t:main}(b).

\subsection{Relation with Kashiwara-Saito singularity}
\label{s:ks}
Kashiwara-Saito discovered the first example of a Schubert variety for $\GL(n,\bbC)$ with a reducible characteristic
cycle.  They found a pair of elements of $\sigma_1,\sigma_2 \in S_8$ such that the conormal bundle to the Schubert
variety $Q_2$ parametrized by $\sigma_2$ appears in the characteristic variety of the the Schubert
variety $Q_1$ parametrized by $\sigma_1$.  More details can be found in \cite{braden} and \cite{williamson}.   
The intersection of the closure of $Q_1$ with a normal slice to $Q_2$ can be realized as 
\begin{equation}
\label{e:ks}
Z_\ks := \left \{ (A_0,A_1,A_2,A_3) \in M(2,\bbC)^4 \: | \; \det(A_i) = \det(A_iA_{i+1}) = 0 \quad i \in \bbZ /4 =\{0,1,2,3\}
 \right \};
\end{equation}
see \cite[Section 3.4]{williamson}.
Thus $Z_\ks \subset M(2,\bbC)^4 \simeq  \bbC^{16}$ is eight-dimensional with an interesting singularity at 0.  The realization of 
our four-dimensional singularity $Z \subset M(2,\bbC)^2 \simeq \bbC^8$ from \eqref{e:Z} is related as follows.  Recall $J$ from
\eqref{e:jt}.  The injection
\begin{align*}
M(2,\bbC)^2 &\lra M(2,\bbC)^4 \\
(B_1,B_2) &\mapsto (JB_1^{tr}J,  B_1, JB_2^{tr}J, B_2) 
\end{align*}
takes our $Z$ into the Kashiwara-Saito singularity $Z_\ks$.  

\subsection{Relation with Williamson's examples \cite{williamson}.}
\label{s:williamson}
Let $X$ be the irreducible Harish-Chandra module for $U(6,6)$ with trivial infinitesimal character corresponding to the trivial local system on the orbit 
$Q$ appearing in Theorem \ref{t:main}(b) (e.g.~\cite[Section 2]{ic3}).  Theorem \ref{t:main}(b) and
\eqref{e:cc-coincideX}
show that $\CC(X)$ has (at least) two irreducible components, each lying over the same $K$ orbit in $\caN_\theta$.

This is the setting of \cite[Corollary 4.2]{ltc} which implies that two linearly independent Joseph polynomials appear in the expansion of a certain Goldie rank polynomial.   More
precisely, let $q_{\Ann(X)}$ be the Goldie rank polynomial corresponding to the annihilator of $X$.  Fix a generic $\xi$ in 
the moment map image of the closure of the conormal bundle to $Q$.  The Springer fiber $\mu^{-1}(\xi)$ intersects
$\overline{T_Q^*\caB}$ in a single irreducible component, say $C$.  
Similarly write $C'$ for the component obtained by 
intersecting  $\mu^{-1}(\xi)$ with
$\overline{T_{Q'}^*\caB}$.

To the components $C$ and $C'$ one can attach Joseph polynomials $p_C$ and $p_{C'}$ (originally considered in a different context in
\cite{joseph-av}; see also \cite{joseph-polys}).  By Theorem \cite[Theorem 3.13]{ltc}, the reducibility of \ref{t:main}(b)  is equivalent to 
\begin{equation}
\label{e:joseph}
\text{$p_C$ and $p_{C'}$ both appear in the expression of $q_{\Ann(X)}$ in terms of Joseph polynomials;}
\end{equation}
the moment map condition in \eqref{e:moment-map-b} is crucial in making this conclusion.
According to the main result of \cite{joseph-av} (see also \cite[Theorem 4.1]{ltc}), 
this is equivalent to the simple highest weight module $L(w)$ for $\frg\frl(12,\bbC)$
with trivial infinitesimal character and $\Ann(L(w^{-1})) = \Ann(X)$ having a reducible associated variety\footnote{To distinguish between the microlocal invariant of the characteristic variety and its moment map image, we will call the latter the associated variety.}.
Since the proof of Theorem \ref{t:main}(b) reduces to Theorem \ref{t:main}(a) via Proposition \ref{p:coh-induction},
we have found an example of a simple highest weight module for $\frg\frl(12,\bbC)$ with reducible associated variety by analyzing
a four-dimensional singular variety originating in geometric theory of the real group $U(4,4)$. 

Related to his investigation of $p$-canonical bases of finite Hecke algebras, Williamson recently found a similar example by analyzing an occurrence of the Kashiwara-Saito singularity in Schubert varieties \cite[Section 3.5]{williamson}.  It turns out that our example coincides with Williamson's.  
To see this, \cite{trapa} computes $C$ and $C'$ in terms of the Steinberg parametrization of irreducible components by standard Young tableaux.  One checks that
 tableaux attached to $C$ and $C'$ by  \cite[Theorem 7.2]{trapa} indeed coincide with the two  $Q$-tableaux appearing in \cite[Section 3.5]{williamson}.  This implies that 
 the reducible characteristic cycle that Williamson computes for a Schubert variety in $\GL(12,\bbC)$ is equivalent to \eqref{e:joseph}, and hence equivalent to the reducibility in 
 Theorem \ref{t:main}(b) which (by its proof) is equivalent to the reducibility in Theorem \ref{t:main}(a) via Proposition \ref{p:coh-induction}.    
 In fact, considerations along these lines initially led us
 to investigate the orbits in Theorem \ref{t:main}.

Finally, in \cite[Remark 2.8(3)]{williamson}, Williamson (based on correspondence with Saito) gives an example in $\GL(13,\mathbb{C})$ of a more complicated Schubert variety with 
reducible characteristic cycle as another incarnation of the Kashiwara-Saito singularity.  Following the reasoning above, the reducibility in this example is equivalent to the existence of a Harish-Chandra module for $U(7,6)$  whose characteristic cycle contains two distinct irreducible components with the same moment map image.  It turns out that Proposition \ref{p:coh-induction} applies to show that the reducibility is arising from the closure of a certain $\GL(4,\bbC) \times \GL(5,\bbC)$ orbit on the flag variety for $\GL(9,\bbC)$.  Analyzing the relevant normal slice, once again, leads exactly to the singularity given in Lemma \ref{l:maina-2}. 

\section{appendix: calculations for the proof of Lemma \ref{l:maina-2}}
\label{s:appendix}

In the Macaulay2 output below, we label the matrix described by \cite{wyser-yong} corresponding to a pair $(i,j)$
as {\tt Mij}.   For such a matrix, the ideal of its $\min(8,i+j)$ minors is denoted {\tt Jij}. For example, {\tt M24} corresponds to the pair $(2,4)$,
and the ideal generated by the vanishing of its rank $6$ minors is {\tt J24}.  

The ideal {\tt J1} consists of the Wyser-Yong equations for the pairs $(2,4), (2,5), $ $(2,6), (3,4),$ $(3,5),(3,6),(4,5)$ and $(4,6)$, together
with \eqref{e:first} and \eqref{e:second}.

The matrices {\tt A1} and {\tt A2} are the ones given in \eqref{e: a} and \eqref{e: b}.  The ideal {\tt J2} is generated by the vanishing of the
rank 2 minors of these matrices. 

The output below verifies that {\tt J1} coincides with {\tt J2}.

\begin{verbatim}
Macaulay2, version 1.9

i1 : R=ZZ/101[x_1..x_4,y_1..y_4];

i2 : M24=matrix{{1,0,1,0,0,-1},{0,1,0,1,-1,0},{x_2,x_1,x_2,x_1,x_1,x_2},
{x_4,x_3,x_4,x_3,x_3,x_4},{0,0,1,0,0,1},{0,0,0,1,1,0},
{0,0,y_1,y_2,-y_2,-y_1},{0,0,y_3,y_4,-y_4,-y_3}}



o2 = | 1   0   1   0   0    -1   |
     | 0   1   0   1   -1   0    |
     | x_2 x_1 x_2 x_1 x_1  x_2  |
     | x_4 x_3 x_4 x_3 x_3  x_4  |
     | 0   0   1   0   0    1    |
     | 0   0   0   1   1    0    |
     | 0   0   y_1 y_2 -y_2 -y_1 |
     | 0   0   y_3 y_4 -y_4 -y_3 |

i3 : J24=minors(6,M24);

i4 : M25=matrix{{1,0,1,0,0,-1,0},{0,1,0,1,-1,0,0},
{x_2,x_1,x_2,x_1,x_1,x_2,1},{x_4,x_3,x_4,x_3,x_3,x_4,0},
{0,0,1,0,0,1,0},{0,0,0,1,1,0,0},{0,0,y_1,y_2,-y_2,-y_1,0},
{0,0,y_3,y_4,-y_4,-y_3,1}};

o4  = | 1   0   1   0   0    -1   0 |
      | 0   1   0   1   -1   0    0 |
      | x_2 x_1 x_2 x_1 x_1  x_2  1 |
      | x_4 x_3 x_4 x_3 x_3  x_4  0 |
      | 0   0   1   0   0    1    0 |
      | 0   0   0   1   1    0    0 |
      | 0   0   y_1 y_2 -y_2 -y_1 0 |
      | 0   0   y_3 y_4 -y_4 -y_3 1 |

i5 : J25=minors(7,M25);

i6 : M26=matrix{{1,0,1,0,0,-1,0,0},{0,1,0,1,-1,0,0,0},
{x_2,x_1,x_2,x_1,x_1,x_2,1,0},{x_4,x_3,x_4,x_3,x_3,x_4,0,1},
{0,0,1,0,0,1,0,0},{0,0,0,1,1,0,0,0},{0,0,y_1,y_2,-y_2,-y_1,0,1},
{0,0,y_3,y_4,-y_4,-y_3,1,0}}

o6 = | 1   0   1   0   0    -1   0 0 |
     | 0   1   0   1   -1   0    0 0 |
     | x_2 x_1 x_2 x_1 x_1  x_2  1 0 |
     | x_4 x_3 x_4 x_3 x_3  x_4  0 1 |
     | 0   0   1   0   0    1    0 0 |
     | 0   0   0   1   1    0    0 0 |
     | 0   0   y_1 y_2 -y_2 -y_1 0 1 |
     | 0   0   y_3 y_4 -y_4 -y_3 1 0 |

i7 : J26=minors(8,M26);

i8 : M34=matrix{{1,0,0,1,0,0,-1},{0,1,-1,0,1,-1,0},
{x_2,x_1,x_1,x_2,x_1,x_1,x_2},{x_4,x_3,x_3,x_4,x_3,x_3,x_4},
{0,0,0,1,0,0,1},{0,0,0,0,1,1,0},{0,0,0,y_1,y_2,-y_2,-y_1},
{0,0,0,y_3,y_4,-y_4,-y_3}}

o8 = | 1   0   0   1   0   0    -1   |
     | 0   1   -1  0   1   -1   0    |
     | x_2 x_1 x_1 x_2 x_1 x_1  x_2  |
     | x_4 x_3 x_3 x_4 x_3 x_3  x_4  |
     | 0   0   0   1   0   0    1    |
     | 0   0   0   0   1   1    0    |
     | 0   0   0   y_1 y_2 -y_2 -y_1 |
     | 0   0   0   y_3 y_4 -y_4 -y_3 |


i9 : J34=minors(7,M34);


i10 : M35=matrix{{1,0,0,1,0,0,-1,0},{0,1,-1,0,1,-1,0,0},
{x_2,x_1,x_1,x_2,x_1,x_1,x_2,1},{x_4,x_3,x_3,x_4,x_3,x_3,x_4,0},
{0,0,0,1,0,0,1,0},{0,0,0,0,1,1,0,0},{0,0,0,y_1,y_2,-y_2,-y_1,0},
{0,0,0,y_3,y_4,-y_4,-y_3,1}}

o10 = | 1   0   0   1   0   0    -1   0 |
      | 0   1   -1  0   1   -1   0    0 |
      | x_2 x_1 x_1 x_2 x_1 x_1  x_2  1 |
      | x_4 x_3 x_3 x_4 x_3 x_3  x_4  0 |
      | 0   0   0   1   0   0    1    0 |
      | 0   0   0   0   1   1    0    0 |
      | 0   0   0   y_1 y_2 -y_2 -y_1 0 |
      | 0   0   0   y_3 y_4 -y_4 -y_3 1 |


i11 : J35=minors(8,M35);

i12 : M36=matrix{{1,0,0,1,0,0,-1,0,0},{0,1,-1,0,1,-1,0,0,0},
{x_2,x_1,x_1,x_2,x_1,x_1,x_2,1,0},{x_4,x_3,x_3,x_4,x_3,x_3,x_4,0,1},
{0,0,0,1,0,0,1,0,0},{0,0,0,0,1,1,0,0,0},{0,0,0,y_1,y_2,-y_2,-y_1,0,1},
{0,0,0,y_3,y_4,-y_4,-y_3,1,0}}



o12 = | 1   0   0   1   0   0    -1   0 0 |
      | 0   1   -1  0   1   -1   0    0 0 |
      | x_2 x_1 x_1 x_2 x_1 x_1  x_2  1 0 |
      | x_4 x_3 x_3 x_4 x_3 x_3  x_4  0 1 |
      | 0   0   0   1   0   0    1    0 0 |
      | 0   0   0   0   1   1    0    0 0 |
      | 0   0   0   y_1 y_2 -y_2 -y_1 0 1 |
      | 0   0   0   y_3 y_4 -y_4 -y_3 1 0 |


i13 : J36=minors(8,M36);

i14 : M45=matrix{{1,0,0,-1,1,0,0,-1,0},{0,1,-1,0,0,1,-1,0,0},
{x_2,x_1,x_1,x_2,x_2,x_1,x_1,x_2,1},{x_4,x_3,x_3,x_4,x_4,x_3,x_3,x_4,0},
{0,0,0,0,1,0,0,1,0},{0,0,0,0,0,1,1,0,0},{0,0,0,0,y_1,y_2,-y_2,-y_1,0},
{0,0,0,0,y_3,y_4,-y_4,-y_3,1}}

o14 = | 1   0   0   -1  1   0   0    -1   0 |
      | 0   1   -1  0   0   1   -1   0    0 |
      | x_2 x_1 x_1 x_2 x_2 x_1 x_1  x_2  1 |
      | x_4 x_3 x_3 x_4 x_4 x_3 x_3  x_4  0 |
      | 0   0   0   0   1   0   0    1    0 |
      | 0   0   0   0   0   1   1    0    0 |
      | 0   0   0   0   y_1 y_2 -y_2 -y_1 0 |
      | 0   0   0   0   y_3 y_4 -y_4 -y_3 1 |

i18 : J45=minors(8,M45);

i16 : M46=matrix{{1,0,0,-1,1,0,0,-1,0,0},{0,1,-1,0,0,1,-1,0,0,0},
{x_2,x_1,x_1,x_2,x_2,x_1,x_1,x_2,1,0},{x_4,x_3,x_3,x_4,x_4,x_3,x_3,x_4,0,1},
{0,0,0,0,1,0,0,1,0,0},{0,0,0,0,0,1,1,0,0,0},{0,0,0,0,y_1,y_2,-y_2,-y_1,0,1},
{0,0,0,0,y_3,y_4,-y_4,-y_3,1,0}}

o16 = | 1   0   0   -1  1   0   0    -1   0 0 |
      | 0   1   -1  0   0   1   -1   0    0 0 |
      | x_2 x_1 x_1 x_2 x_2 x_1 x_1  x_2  1 0 |
      | x_4 x_3 x_3 x_4 x_4 x_3 x_3  x_4  0 1 |
      | 0   0   0   0   1   0   0    1    0 0 |
      | 0   0   0   0   0   1   1    0    0 0 |
      | 0   0   0   0   y_1 y_2 -y_2 -y_1 0 1 |
      | 0   0   0   0   y_3 y_4 -y_4 -y_3 1 0 |

i17 : J46=minors(8,M46);


i19 : J1=J24+J25+J26+J34+J35+J36+J45+J46+ideal(x_1*x_4-x_2*x_3, y_1*y_4-y_2*y_3);


i20 : A1=matrix{{x_1,x_2},{x_3,x_4},{y_2,y_1},{y_4,y_3}}

o20 = | x_1 x_2 |
      | x_3 x_4 |
      | y_2 y_1 |
      | y_4 y_3 |

i21 : A2=matrix{{x_1,x_3},{x_2,x_4},{y_3,y_1},{y_4,y_2}}

o21 = | x_1 x_3 |
      | x_2 x_4 |
      | y_3 y_1 |
      | y_4 y_2 |


i22 : J2=minors(2,A1)+minors(2,A2);

i23 : J1==J2

o23 = true

\end{verbatim}

\end{document}